\newtheorem{theorem}{Theorem}[section]
\newtheorem{lemma}{Lemma}[section]
\newtheorem{remark}{Remark}[section]
\newtheorem{definition}{Definition}[section]
\newtheorem{corollary}{Corollary}[section]
\newtheorem{proposition}{Proposition}[section]
\numberwithin{equation}{section}
\begin{document}
	
\title{On the matrix Cauchy-Schwarz  inequality}
\author{Mohammad Sababheh$^{*}$, Cristian Conde, and Hamid Reza Moradi}
\subjclass[2010]{Primary 47A63, Secondary 15A60, 46L05.}
\keywords{Lieb functions, operator inequality, Cauchy-Schwarz inequality.}

\begin{abstract}
The main goal of this work is to present new matrix inequalities of the Cauchy-Schwarz type. In particular, we investigate the so-called Lieb functions, whose definition came as an umbrella of Cauchy-Schwarz-like inequalities, then we consider the mixed Cauchy-Schwarz inequality. This latter inequality has been influential in obtaining several other matrix inequalities, including numerical radius and norm results. 
Among many other results, we show that
\[\left\| T \right\|\le \frac{1}{4}\left( \left\| \left| T \right|+\left| {{T}^{*}} \right|+2\mathfrak RT \right\|+\left\| \left| T \right|+\left| {{T}^{*}} \right|-2\mathfrak RT \right\| \right),\]
where $\mathfrak RT$ is the real part of $T$.
\end{abstract}
\maketitle
%------------------------------------------------------------------------------------%
\pagestyle{myheadings}
\markboth{\centerline {}}
{\centerline {}}
\bigskip
\bigskip
%------------------------------------------------------------------------------------%
%------------------------------------------------------------------------------------% 

\section{Introduction}
Let $\mathcal{M}_n$ be the algebra of all $n\times n$ complex matrices, with zero element $O$.  The matrix Cauchy-Schwarz (for brevity, we write CS) inequality states that if $A,B,X\in\mathcal{M}_n$ and if $|||\cdot|||$ is any unitarily invariant norm on $\mathcal{M}_n$, then \cite[Theorem IX.5.1]{1}
\begin{equation}\label{eq_matrix_CS}
|||A^*XB|||^2\leq |||AA^*X|||\;|||XBB^*|||,
\end{equation}
where $A^*$ denotes the conjugate of the matrix $A$. Notice that if we let $A={\text{diag}}[x_i]$ and $B={\text{diag}}[y_i]$, then the scalar CS inequality follows from \eqref{eq_matrix_CS}, by letting $X=I$; the identity matrix.

This article aims to discuss further the matrix CS inequality and the mixed CS inequality. This will be done in two sections; where in the first section, we deal with a class of functions, namely Lieb functions, that was initially defined as a class of functions satisfying CS-type inequalities, then we present more elaborated mixed CS inequalities.

Recall that if $A,B\in\mathcal{M}_n$ are positive definite (denoted as $A,B>O$) the geometric mean of $A,B$ is given by $A\sharp B=A^{\frac{1}{2}}\left(A^{-\frac{1}{2}}BA^{-\frac{1}{2}}\right)^{\frac{1}{2}}A^{\frac{1}{2}}.$ The geometric mean of matrices has received considerable attention in the literature due to its significance in understanding the geometry of $\mathcal{M}_n$. We refer the reader to \cite{ando_hiai,drag,drury,Furuta,hms,lin,liu} as a sample of references dealing with this concept. 

Among many interesting results about Lieb functions, we show that these functions satisfy the Ando-type inequality 
$$f(A\sharp B)\leq f(A)\sharp f(B).$$ 
On the other hand, as a conclusion to the discussion of the mixed CS inequality, we show that for $T\in\mathcal{M}_n$,
\[\left\| T \right\|\le \frac{1}{2}\left( \left\| \frac{{{f}^{2}}\left( \left| T \right| \right)+{{g}^{2}}\left( \left| {{T}^{*}} \right| \right)}{2}+\mathfrak RT \right\|+\left\| \frac{{{f}^{2}}\left( \left| T \right| \right)+{{g}^{2}}\left( \left| {{T}^{*}} \right| \right)}{2}-\mathfrak RT \right\| \right),\]
where $f,g:[0,\infty)\to [0,\infty)$ satisfy $f(t)g(t)=t$.  Here, $|A|=(A^*A)^{\frac{1}{2}}$ and $\mathfrak{R}T$ is the real part of $T$, defined by $\mathfrak{R}T=\frac{T+T^*}{2}.$
This inequality is significant since the following inequality is not valid: $||T||\leq\frac{1}{2}||f^2(|T|)+g^2(|T^*|)||$; see Corollary \ref{nee1} for better comprehension of this result. Many other results, including sums, products, and convex combinations, will be shown too.

\section{Lieb functions}
In \cite{seiler}, the following inequality among determinants was shown as a possible determinant CS inequality
\begin{equation}\label{eq_det}
|\det(I+A+B)|\leq \det(I+|A|)\det(I+|B|),
\end{equation}
where $A,B\in\mathcal{M}_n$.

One year later, Lieb \cite{3} extended this inequality to a broader class of matrix functions, denoted by $\mathcal{L},$ and defined as follows. 
\begin{definition}
Let $f:\mathcal{M}_n\to\mathbb{C}$ be a complex valued function defined on $\mathcal{M}_n$. We say that $f\in\mathcal{L}$ if it satisfies the following two properties for $A,B\in\mathcal{M}_n$:
\begin{itemize}
\item[(i)] $f\left( B \right)\ge f\left( A \right)\geq 0$ if $B\ge A\ge O$;
\item[(ii)]  ${{\left| f\left( {{A}^{*}}B \right) \right|}^{2}}\le f\left( {{A}^{*}}A \right)f\left( {{B}^{*}}B \right)$ for all $A,B$.
\end{itemize}
\end{definition}

In this context, we write $A\geq O$ if $\left<Ax,x\right>\geq 0$ for all $x\in\mathbb{C}^n$, while we write $A\geq B$ if $A-B\geq O.$ We distinguish the zero matrix $O$ from the scalar 0 due to the differences between the ranges of the functions we deal with.
Examples of functions in $\mathcal L$ are the determinant, permanent, spectral radius, any elementary symmetric function of the eigenvalues, and any unitarily invariant norm; see \cite{1,3,5}.\\
The second inequality in the above definition is CS-like inequality. It is also implicitly understood, from the first property, that when $A\geq O$ then $f(A)\geq 0.$

Lieb functions were discussed in detail in the literature, and some exciting properties were shown for this class. We refer the reader, in particular, to \cite{3} for the original definition and the basic properties, while we refer the reader to \cite[Section IX.5]{1} for other properties of the class $\mathcal{L}$, and its connection to matrix inequalities. We also refer the reader to \cite{akhrass} for further discussion of Lieb functions and sectorial matrices.

The significance of discussing the class $\mathcal{L}$ is the way it retrieves properties of this class examples, such as the determinant and unitarily invariant norms, for example.

This section investigates  further properties of this class and shows many interesting inequalities that extend some known inequalities for particular examples of such functions. For example, we show that if $f\in\mathcal{L}$ and $T\in\mathcal{M}_n$, then
$$|f(T)|^2\leq f(|T|)f(|T^*|)$$ as a CS-type inequality; then weighted form of this inequality will be presented. Many other interesting inequalities, including relations among $f(A+B)$ and $f(|A|+|B|)f(|A^*|+|B^*|)$, will be presented; however, we will discuss such relations in a more general scenario.

To achieve our target in this section, we will need some lemmas, as follows. 
\begin{lemma}\label{3}
\cite[Theorem IX.5.10]{1} A continuous function $f:\mathcal{M}_n\to\mathbb{C}$ is in the class $\mathcal L$ if and only if it  satisfies the following two conditions:
\begin{itemize}
\item[(i)] $f\left( A \right)\ge 0$ for all $A\ge O$;
\item[(ii)]  ${{\left| f\left( C \right) \right|}^{2}}\le f\left( A \right)f\left( B \right)$ for all $A,B,C$ such that $\left[ \begin{matrix}
   A & {{C}^{*}}  \\
   C & B  \\
\end{matrix} \right]\geq O.$
 \end{itemize}
\end{lemma}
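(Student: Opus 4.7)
The plan is to prove each direction of the equivalence separately, with the key structural tool being the factorization of a positive semidefinite $2\times 2$ block matrix.

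For the forward implication, suppose $f\in\mathcal{L}$. Condition (i) of the lemma is immediate from axiom (i) of $\mathcal{L}$ on taking the lower matrix to be $O$. For condition (ii), I would first assume $A,B>O$ and invoke the standard characterization that $\left[\begin{matrix} A & C^* \\ C & B \end{matrix}\right]\geq O$ if and only if $C=B^{1/2}KA^{1/2}$ for some contraction $K\in\mathcal{M}_n$ (i.e.\ $KK^*\leq I$). The critical move is the asymmetric choice $P=K^*B^{1/2}$ and $Q=A^{1/2}$, for which $P^*Q=B^{1/2}KA^{1/2}=C$ (not $C^*$), $P^*P=B^{1/2}KK^*B^{1/2}\leq B$, and $Q^*Q=A$. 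Axiom (ii) of $\mathcal{L}$ then yields $|f(C)|^2=|f(P^*Q)|^2\leq f(P^*P)\,f(Q^*Q)\leq f(A)\,f(B)$, where the last inequality invokes the monotonicity in axiom (i) of $\mathcal{L}$. For general $A,B\geq O$, I would replace $A,B$ by $A+\varepsilon I,\;B+\varepsilon I$ (the block matrix remains PSD) and pass to the limit using continuity of $f$.

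For the reverse implication, suppose $f$ satisfies (i) and (ii) of the lemma. To obtain the monotonicity half of axiom (i) of $\mathcal{L}$, given $B\geq A\geq O$, I would write
\[\left[\begin{matrix} A & A \\ A & B \end{matrix}\right]=\left[\begin{matrix} I \\ I \end{matrix}\right]A\left[\begin{matrix} I & I \end{matrix}\right]+\left[\begin{matrix} O & O \\ O & B-A \end{matrix}\right]\geq O,\]
and apply condition (ii) of the lemma with $C=A$, obtaining $f(A)^2\leq f(A)f(B)$ and hence $f(A)\leq f(B)$ (using $f(A)\geq 0$ from (i)). For axiom (ii) of $\mathcal{L}$, I would use
\[\left[\begin{matrix} A^*A & A^*B \\ B^*A & B^*B \end{matrix}\right]=\left[\begin{matrix} A^* \\ B^* \end{matrix}\right]\left[\begin{matrix} A & B \end{matrix}\right]\geq O,\]
so condition (ii) of the lemma gives $|f(B^*A)|^2\leq f(A^*A)f(B^*B)$, which becomes the required CS inequality after renaming $A\leftrightarrow B$.

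The hard part will be choosing the factorization in the forward direction correctly: most natural attempts (such as $P=A^{1/2},\;Q=KA^{1/2}$) end up controlling $|f(C^*)|^2$ instead of $|f(C)|^2$, and these quantities are not comparable in general when $C$ is not Hermitian. The asymmetric pair $P=K^*B^{1/2},\;Q=A^{1/2}$ is the clean resolution. The continuity hypothesis on $f$ is also essential, since it is what allows the positive semidefinite case to be reduced to the positive definite case where the contraction factorization $C=B^{1/2}KA^{1/2}$ is available.
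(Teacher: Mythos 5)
Your proof is correct. Note that the paper does not prove this lemma at all --- it simply cites it as Theorem IX.5.10 of Bhatia's \emph{Matrix Analysis} --- so there is no in-paper argument to compare against; what you have written is essentially the standard textbook proof. The reverse implication (monotonicity via the positive block $\left[\begin{matrix} A & A \\ A & A\end{matrix}\right]+\left[\begin{matrix} O & O \\ O & B-A\end{matrix}\right]$, and the CS axiom via $\left[\begin{matrix} A^* \\ B^*\end{matrix}\right]\left[\begin{matrix} A & B\end{matrix}\right]\ge O$) and the forward implication (contraction factorization $C=B^{1/2}KA^{1/2}$, followed by monotonicity and an $\varepsilon$-regularization using continuity) are exactly the expected ingredients, and each step checks out; the only cosmetic gap is that when $f(A)=0$ you also need $f(B)\ge 0$ from (i) to conclude $f(A)\le f(B)$, which is immediate. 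One remark: the ``asymmetric'' choice $P=K^*B^{1/2}$, $Q=A^{1/2}$ is not really forced --- the symmetric-looking choice $P=B^{1/2}$, $Q=KA^{1/2}$ gives $P^*Q=C$, $P^*P=B$, $Q^*Q=A^{1/2}K^*KA^{1/2}\le A$ and works just as well; and even if one only controlled $|f(C^*)|^2$, conjugating the block matrix by the flip unitary interchanges $C$ and $C^*$ (and $A$ and $B$), so that route would also close. Hence the concern you flag at the end, while harmless, is not an essential difficulty.
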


\begin{lemma}\label{1}
\cite[Lemma 2]{4} Let $A,B,C\in\mathcal{M}_n$ be such that $A,B\geq O$  and $BC=CA$. If $\left[ \begin{matrix}
   A & {{C}^{*}}  \\
   C & B  \\
\end{matrix} \right]\geq O$, then $\left[ \begin{matrix}
   {{g}^{2}}\left( A \right) & {{C}^{*}}  \\
   C & {{h}^{2}}\left( B \right)  \\
\end{matrix} \right]\geq O$
for any pair of continuous non-negative functions $g,h$ on $\left[ 0,\infty  \right)$ satisfying $g\left( t \right)h\left( t \right)=t$ for all $t\in \left[ 0,\infty  \right)$.
\end{lemma}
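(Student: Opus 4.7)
The plan is to reduce to the strictly positive case by replacing $A$ by $A+\epsilon I$ and $B$ by $B+\epsilon I$, which preserves both the intertwining relation ($(B+\epsilon I)C = BC+\epsilon C = CA+\epsilon C = C(A+\epsilon I)$) and the block-matrix positivity (the shift only adds $\epsilon I_{2n}$), and then to pass to the limit $\epsilon\to 0^+$ using norm continuity of the continuous functional calculus. So assume $A,B>O$.

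Under this assumption, the standard factorization criterion
\[
\begin{pmatrix} P & Z^* \\ Z & Q \end{pmatrix} \ge O \;\Longleftrightarrow\; \|P^{-1/2} Z^* Q^{-1/2}\| \le 1
\]
(valid for $P,Q>O$) translates the hypothesis to $\|A^{-1/2}C^*B^{-1/2}\|\le 1$ and the conclusion to $\|g(A)^{-1}C^*h(B)^{-1}\|\le 1$. The heart of the proof is to observe that these are the \emph{same} operator.

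From $BC=CA$, induction on polynomials followed by Stone--Weierstrass gives $\varphi(B)C=C\varphi(A)$ for every continuous $\varphi$ on an interval containing $\sigma(A)\cup\sigma(B)$; taking adjoints yields $C^*\varphi(B)=\varphi(A)C^*$ for real-valued $\varphi$. Because $g(t)h(t)=t$ is strictly positive on the shifted spectra, $1/g$, $1/h$, and $t^{-1/2}$ are all continuous there, so the intertwining specializes to $C^*h(B)^{-1}=h(A)^{-1}C^*$ and $C^*B^{-1/2}=A^{-1/2}C^*$. Since $g(A)$ and $h(A)$ commute (both being functions of $A$) and satisfy $g(A)h(A)=A$, one computes
\[
g(A)^{-1} C^* h(B)^{-1} \;=\; g(A)^{-1} h(A)^{-1} C^* \;=\; A^{-1}C^* \;=\; A^{-1/2} C^* B^{-1/2},
\]
confirming that the two contractions coincide.

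I expect the principal obstacle to be the handling of potentially degenerate $g$ or $h$: if either vanishes at $0$ (as with $g(t)=h(t)=t^{1/2}$), then $g(A)^{-1}$ is ill-defined for $A\ge O$, so the clean algebraic identity above genuinely requires the $\epsilon$-shift. Once that is in place, one only needs the routine facts that the PSD cone is closed and that $g^2(A+\epsilon I)\to g^2(A)$ and $h^2(B+\epsilon I)\to h^2(B)$ in norm by continuity of the functional calculus for continuous $g,h$.
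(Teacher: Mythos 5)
Your argument is correct, but note that the paper itself offers no proof of this lemma: it is quoted verbatim from Kittaneh \cite[Lemma 2]{4}, so there is no in-paper argument to compare against. Your proof stands on its own. The $\epsilon$-regularization is legitimate and, as you observe, genuinely necessary (for $g(t)=h(t)=\sqrt{t}$ the inverses $g(A)^{-1}$, $h(B)^{-1}$ need not exist when $A,B$ are merely positive semidefinite); the shift preserves both $BC=CA$ and the block positivity, and the passage to the limit only uses uniform continuity of $g^2,h^2$ on a compact interval containing all the spectra together with closedness of the positive semidefinite cone. The two central ingredients are also sound: the criterion $\left[\begin{smallmatrix} P & Z^* \\ Z & Q \end{smallmatrix}\right]\ge O \Leftrightarrow \|P^{-1/2}Z^*Q^{-1/2}\|\le 1$ for $P,Q>O$ (congruence by $P^{-1/2}\oplus Q^{-1/2}$), and the extension of $BC=CA$ to $\varphi(B)C=C\varphi(A)$ for continuous $\varphi$ via polynomials and Stone--Weierstrass, which after taking adjoints gives $C^*h(B)^{-1}=h(A)^{-1}C^*$ and $C^*B^{-1/2}=A^{-1/2}C^*$ on the shifted spectra; the identity $g(A)^{-1}C^*h(B)^{-1}=A^{-1}C^*=A^{-1/2}C^*B^{-1/2}$ then transfers the contraction property exactly as claimed. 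This intertwining of the functional calculus is also the engine of Kittaneh's original (Hilbert-space) proof, so your route is close in spirit, but packaging it through the contraction characterization plus an $\epsilon$-shift gives a clean, self-contained matrix-level verification that the paper does not supply.
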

We should remark that when $f:[0,\infty)\to [0,\infty)$ is a given function and $A\geq O$, then $f(A)$ is defined via functional calculus by $$f(A)=U{\text{diag}}[f(\lambda_i)]U^*,$$ where $U$ is a unitary matrix such that $A=U{\text{diag}}[\lambda_i]U^*$; the spectral decomposition of $A$.

\begin{lemma}\label{2}
\cite[Lemma 3]{4} For any $T\in\mathcal{M}_n$, 
$$\left[ \begin{matrix}
   \left| T \right| & {{T}^{*}}  \\
   T & \left| {{T}^{*}} \right|  \\
\end{matrix} \right]\ge O.$$
\end{lemma}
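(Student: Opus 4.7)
The plan is to reduce the positivity of the block matrix to a factorization of the form $YY^*$ using the polar decomposition of $T$. Write $T = U|T|$, where $U$ is the partial isometry from the polar decomposition (which can be extended to a unitary in $\mathcal{M}_n$). Then $T^* = |T| U^*$, and the standard identity $|T^*| = U|T|U^*$ follows from $|T^*|^2 = TT^* = U|T|^2 U^*$ together with $U^*U|T| = |T|$ (since $U^*U$ is the projection onto the closure of the range of $|T|$) and uniqueness of the positive square root.

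With these identifications, I would rewrite the block matrix as
\[
\begin{bmatrix} |T| & T^* \\ T & |T^*| \end{bmatrix}
= \begin{bmatrix} |T| & |T|U^* \\ U|T| & U|T|U^* \end{bmatrix}
= \begin{bmatrix} I \\ U \end{bmatrix} |T| \begin{bmatrix} I & U^* \end{bmatrix}.
\]
Since $|T| \geq O$, I can split $|T| = |T|^{1/2} |T|^{1/2}$ and obtain
\[
\begin{bmatrix} |T| & T^* \\ T & |T^*| \end{bmatrix}
= \left( \begin{bmatrix} I \\ U \end{bmatrix} |T|^{1/2} \right) \left( \begin{bmatrix} I \\ U \end{bmatrix} |T|^{1/2} \right)^{\!*},
\]
which is manifestly positive semidefinite as a matrix of the form $YY^*$.

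The only delicate point is the identity $|T^*| = U|T|U^*$: one must check it using the partial-isometry property $U^*U|T| = |T|$ rather than pretending $U$ is always a genuine unitary. This is the step I expect to need the most care, but it is a standard fact. Alternatively, one could bypass this issue entirely by extending $U$ to a unitary $V$ on $\mathbb{C}^n$ (always possible in finite dimensions) that still satisfies $T = V|T|$ and $|T^*| = V|T|V^*$, after which the factorization above is purely formal.
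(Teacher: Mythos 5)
Your proof is correct: the factorization $\left[\begin{smallmatrix} |T| & T^* \\ T & |T^*| \end{smallmatrix}\right] = \bigl(\left[\begin{smallmatrix} I \\ U \end{smallmatrix}\right]|T|^{1/2}\bigr)\bigl(\left[\begin{smallmatrix} I \\ U \end{smallmatrix}\right]|T|^{1/2}\bigr)^*$ via the polar decomposition, together with $|T^*|=U|T|U^*$, is exactly the standard argument; the paper itself does not prove this lemma but cites Kittaneh's Lemma 3, whose proof is this same factorization. Your handling of the partial-isometry issue is fine, and in the matrix setting one can simply take $U$ unitary from the outset, which makes that point immediate.
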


Our first main result in this section reads as follows.
\begin{theorem}\label{thm_|T|}
Let $T\in {{\mathcal M}_{n}}$ and let $f\in \mathcal L$. If $g, h$ are non-negative continuous functions on $\left[ 0,\infty  \right)$ satisfying $g\left( t \right)h\left( t \right)=t$, ($t\ge 0$), then
\[{{\left| f\left( T \right) \right|}^{2}}\le f\left( {{g}^{2}}\left( \left| T \right| \right) \right)f\left( {{h}^{2}}\left( \left| {{T}^{*}} \right| \right) \right).\]
In particular,
$$|f(T)|^2\leq f(|T|)f(|T^*|).$$ 
\end{theorem}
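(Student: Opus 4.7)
The plan is to chain the three preparatory lemmas in the natural order: start from the $2\times 2$ block positivity of Lemma \ref{2}, boost it via Lemma \ref{1} to accommodate the functions $g,h$, and then apply the characterization in Lemma \ref{3} to read off the Cauchy--Schwarz-type estimate for $f$.

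More concretely, I would first set $A=|T|$, $B=|T^*|$, $C=T$. Lemma \ref{2} gives
\[
\left[\begin{matrix} |T| & T^* \\ T & |T^*| \end{matrix}\right]\ge O,
\]
with $|T|,|T^*|\ge O$. To invoke Lemma \ref{1} I must verify the compatibility relation $BC=CA$, i.e.\ $|T^*|\,T=T\,|T|$. Using the polar decomposition $T=U|T|$, one has $|T^*|=U|T|U^*$, and since $U^*U$ is the projection onto the closure of the range of $|T|$ it acts as the identity on $|T|$. Hence $|T^*|T=U|T|U^*\cdot U|T|=U|T|^2=T|T|$. This is really the only subtle step, and it is standard.

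Having $BC=CA$, Lemma \ref{1} with the pair $(g,h)$ (whose hypotheses $g(t)h(t)=t$, $g,h\ge 0$ continuous, match exactly what we have) upgrades the block positivity to
\[
\left[\begin{matrix} g^2(|T|) & T^* \\ T & h^2(|T^*|) \end{matrix}\right]\ge O.
\]
Now Lemma \ref{3}(ii), applied to this block matrix with $A=g^2(|T|)$, $B=h^2(|T^*|)$, $C=T$, yields precisely
\[
|f(T)|^2\le f\bigl(g^2(|T|)\bigr)\,f\bigl(h^2(|T^*|)\bigr),
\]
which is the desired inequality. The ``in particular'' statement is obtained by specializing $g(t)=h(t)=\sqrt{t}$, so that $g^2(t)=h^2(t)=t$ and the right-hand side becomes $f(|T|)f(|T^*|)$.

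The only real obstacle is the identity $|T^*|T=T|T|$ needed to apply Lemma \ref{1}; everything else is a direct assembly of the lemmas. If one wanted to avoid invoking the polar decomposition, one could instead note that $T^*T\cdot T^* = T^*\cdot TT^*$ implies $p(T^*T)T^*=T^*p(TT^*)$ for every polynomial $p$, hence for $p(s)=\sqrt{s}$ by approximation, giving $|T|T^*=T^*|T^*|$; taking adjoints yields $T|T|=|T^*|T$.
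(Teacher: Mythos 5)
Your proof is correct and follows essentially the same route as the paper: Lemma \ref{2} for the block positivity, the intertwining relation $T|T|=|T^*|T$ to apply Lemma \ref{1} with the pair $(g,h)$, and then Lemma \ref{3}(ii), with $g(t)=h(t)=\sqrt t$ giving the particular case. The only difference is cosmetic: the paper deduces $T|T|=|T^*|T$ from $T|T|^2=|T^*|^2T$, while you verify it via the polar decomposition (or polynomial approximation), which is a valid justification of the same identity.
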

\begin{proof}
We mimic some ideas from \cite{4}. Since $T{{\left| T \right|}^{2}}={{\left| {{T}^{*}} \right|}^{2}}T$, it follows that $T\left| T\right|=\left| {{T}^{*}} \right|T$. Hence,
 by Lemmas \ref{1} and \ref{2}, we have 
 \begin{equation}\label{9}
\left[ \begin{matrix}
     {{g}^{2}}\left( \left| T \right| \right) & {{T}^{*}}  \\
     T & {{h}^{2}}\left( \left| {{T}^{*}} \right| \right)  \\
  \end{matrix} \right]\ge O.
 \end{equation}
The first required inequality now follows from Lemma \ref{3}. For the second inequality, we let $g(t)=h(t)=\sqrt{t}$ in the first inequality. This completes the proof.
\end{proof}
Noting that unitarily invariant norms and determinants are Lieb functions, Theorem \ref{thm_|T|} implies 
$$|||T|||^2\leq |||\;|T|\;|||\;|||\;|T^*|\;|||\;{\text{and}}\;|\det T|^2\leq \det|T|\det|T^*|,\;T\in\mathcal{M}_n.$$ This follows from $g(t)=h(t)=\sqrt{t}.$ However, if $0\leq v\leq 1$, then $g(t)=t^{v}$ and $h(t)=t^{1-v}$ satisfy the assumptions of Theorem \ref{thm_|T|}. Applying these functions imply
$$|||T|||^2\leq |||\;|||\;|T|^{2v}|||\;|||\;|T^*|^{2(1-v)}|||\;{\text{and}}\;|\det T|^2\leq \det|T|^{2v}\det|T^*|^{2(1-v)}.$$

An exciting consequence of Theorem \ref{thm_|T|} is the following weighted inequality.
\begin{corollary}\label{4}
Let $T\in {{\mathcal M}_{n}}$ and let $f\in \mathcal L$. Then for any $0\le t\le 1$,
\[{{\left| f\left( T \right) \right|}^{2}}\le f\left( {{\left| {{T}^{*}} \right|}^{2\left( 1-t \right)}} \right)f\left( {{\left| T \right|}^{2t}} \right).\]
\end{corollary}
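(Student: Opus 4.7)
The plan is to derive the corollary as a direct specialization of Theorem~\ref{thm_|T|}. For the given parameter $t\in[0,1]$, I would set $g(s)=s^{t}$ and $h(s)=s^{1-t}$ on $[0,\infty)$. These are continuous and non-negative, and they satisfy the required product identity $g(s)h(s)=s^{t}\cdot s^{1-t}=s$, so the hypotheses of Theorem~\ref{thm_|T|} are met.

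With these choices, Theorem~\ref{thm_|T|} immediately yields
\[
|f(T)|^{2}\le f\!\left(g^{2}(|T|)\right)f\!\left(h^{2}(|T^{*}|)\right)=f\!\left(|T|^{2t}\right)f\!\left(|T^{*}|^{2(1-t)}\right),
\]
where the fractional powers are understood via the functional calculus. To rearrange the factors into the order given in the statement of the corollary, I would invoke property~(i) of the Lieb class: since $|T|^{2t}\ge O$ and $|T^{*}|^{2(1-t)}\ge O$, both $f(|T|^{2t})$ and $f(|T^{*}|^{2(1-t)})$ are non-negative real numbers, hence scalars that commute. Swapping the two factors then gives the claimed inequality $|f(T)|^{2}\le f(|T^{*}|^{2(1-t)})f(|T|^{2t})$.

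I do not anticipate any real obstacle; the corollary is essentially a one-parameter substitution into Theorem~\ref{thm_|T|}. The only mild caveat is at the endpoints $t=0$ and $t=1$, where one of $g,h$ reduces to the constant function~$1$. In that case the corresponding factor becomes $f(I)$ under the functional calculus, and the product identity $g(s)h(s)=s$ still holds on $[0,\infty)$, so the argument goes through unchanged.
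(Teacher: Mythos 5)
Your proposal is correct and follows essentially the same route as the paper, which obtains Corollary~\ref{4} by taking $g(s)=s^{t}$ and $h(s)=s^{1-t}$ in Theorem~\ref{thm_|T|} (exactly the substitution noted in the discussion preceding the corollary). Since $f$ is scalar-valued, the reordering of the two factors is immediate, and your endpoint remark for $t=0,1$ is a harmless observation that does not affect the argument.
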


\begin{remark}
If $T\in {{\mathcal M}_{n}}$ is normal, we infer from Corollary \ref{4} that
\[\left| f\left( T \right) \right|\le f\left( \left| T \right| \right).\]
We also remark that the positivity of the matrix
$\left[ \begin{matrix}
   {{\left| T \right|}^{2}} & {{T}^{*}}  \\
   T & I  \\
\end{matrix} \right]$ (see, e.g., \cite[Proposition 1.7.7]{7}), implies
\[{{\left| f\left( T \right) \right|}^{2}}\le f\left( {{\left| T \right|}^{2}} \right)f\left( I \right);\] which also follows from Corollary \ref{4} by letting $t=1$.
\end{remark}

In studying matrix inequality, the quantity $f(A+B)$, for certain $f$, has received a renowned interest. For example, if $f:[0,\infty)\to[0,\infty)$ is concave and $A,B\geq O$, one has the inequality 
$$|||f(A+B)|||\leq |||f(A)+f(B)|||,$$ for any unitarily invariant norm $|||\cdot|||$ on $\mathcal{M}_n$; see \cite{bourin}. On the other hand, the quantity $f((1-v)A+vB)$ has significance in convex matrix inequalities. The following result presents possible inequalities for these quantities via Lieb functions.

\begin{theorem}
Let $A,B\in {{\mathcal M}_{n}}$ and let $f\in \mathcal L$. If $g, h$ are non-negative continuous functions on $\left[ 0,\infty  \right)$ satisfying $g\left( t \right)h\left( t \right)=t$, ($t\ge 0$), then
\begin{equation}\label{10}
{{\left| f\left( A+B \right) \right|}^{2}}\le f\left( {{g}^{2}}\left( \left| A \right| \right)+{{g}^{2}}\left( \left| B \right| \right) \right)f\left( {{h}^{2}}\left( \left| {{A}^{*}} \right| \right)+{{h}^{2}}\left( \left| {{B}^{*}} \right| \right) \right),
\end{equation}
and
\begin{equation}\label{11}
{{\left| f\left( \left( 1-v \right)A+vB \right) \right|}^{2}}\le f\left( \left( 1-v \right){{g}^{2}}\left( \left| A \right| \right)+v{{g}^{2}}\left( \left| B \right| \right) \right)f\left( \left( 1-v \right){{h}^{2}}\left( \left| {{A}^{*}} \right| \right)+v{{h}^{2}}\left( \left| {{B}^{*}} \right| \right) \right),
\end{equation}
for any $0\le v\le 1$.
\end{theorem}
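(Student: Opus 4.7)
The plan is to reduce both inequalities to Lemma \ref{3} by building a $2\times 2$ block positive semidefinite matrix whose off‑diagonal corner is $A+B$ (respectively $(1-v)A+vB$) and whose diagonal corners agree with the right‑hand sides of \eqref{10} and \eqref{11}. The starting point, inherited from the proof of Theorem \ref{thm_|T|}, is that applying Lemmas \ref{1} and \ref{2} separately to each of $A$ and $B$ gives
\[
\begin{bmatrix} g^{2}(|A|) & A^{*} \\ A & h^{2}(|A^{*}|) \end{bmatrix}\ge O,\qquad \begin{bmatrix} g^{2}(|B|) & B^{*} \\ B & h^{2}(|B^{*}|) \end{bmatrix}\ge O.
\]

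For \eqref{10}, I would exploit the fact that the cone of positive semidefinite matrices is closed under addition: summing the two block matrices above yields
\[
\begin{bmatrix} g^{2}(|A|)+g^{2}(|B|) & (A+B)^{*} \\ A+B & h^{2}(|A^{*}|)+h^{2}(|B^{*}|) \end{bmatrix}\ge O,
\]
at which point Lemma \ref{3} with $C=A+B$ delivers \eqref{10} directly. For \eqref{11}, the same cone is also closed under non‑negative scalar multiplication, so taking the convex combination with weights $1-v$ and $v$ produces a positive semidefinite block matrix whose off‑diagonal entry is exactly $(1-v)A+vB$ and whose diagonal entries are $(1-v)g^{2}(|A|)+v\,g^{2}(|B|)$ and $(1-v)h^{2}(|A^{*}|)+v\,h^{2}(|B^{*}|)$; a second application of Lemma \ref{3} then gives \eqref{11}.

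The main obstacle is conceptual rather than technical: one must recognize that Lemma \ref{1} can be applied individually to $A$ and to $B$ (the commutation hypothesis $BC=CA$ reduces, in each case, to the polar‑decomposition identity $|T^{*}|T=T|T|$ noted in the proof of Theorem \ref{thm_|T|}) and that the combinatorial work of putting $A+B$ or $(1-v)A+vB$ into the off‑diagonal corner is done \emph{for free} by the linear structure of the positive semidefinite cone. Once this is in place, no further computation is needed, and both \eqref{10} and \eqref{11} follow from a single invocation of Lemma \ref{3}.
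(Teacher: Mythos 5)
Your proposal is correct and follows essentially the same route as the paper: establish the block positivity of $\left[\begin{smallmatrix} g^{2}(|T|) & T^{*} \\ T & h^{2}(|T^{*}|) \end{smallmatrix}\right]$ for $T=A$ and $T=B$ via Lemmas \ref{1} and \ref{2} (as in Theorem \ref{thm_|T|}), use closure of the positive semidefinite cone under sums and nonnegative scalar multiples to place $A+B$, respectively $(1-v)A+vB$, in the off-diagonal corner, and conclude with Lemma \ref{3}(ii). No gaps; this is exactly the paper's argument.
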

\begin{proof}
By \eqref{9}, we have
	\[\left[ \begin{matrix}
   {{g}^{2}}\left( \left| A \right| \right) & {{A}^{*}}  \\
   A & {{h}^{2}}\left( \left| {{A}^{*}} \right| \right)  \\
\end{matrix} \right]\ge O,\;\left[ \begin{matrix}
   {{g}^{2}}\left( \left| B \right| \right) & {{B}^{*}}  \\
   B & {{h}^{2}}\left( \left| {{B}^{*}} \right| \right)  \\
\end{matrix} \right]\ge O,\]
and
\[\left[ \begin{matrix}
   \left( 1-v \right){{g}^{2}}\left( \left| A \right| \right) & \left( 1-v \right){{A}^{*}}  \\
   \left( 1-v \right)A & \left( 1-v \right){{h}^{2}}\left( \left| {{A}^{*}} \right| \right)  \\
\end{matrix} \right]\ge O,\;\left[ \begin{matrix}
   v{{g}^{2}}\left( \left| B \right| \right) & v{{B}^{*}}  \\
   vB & v{{h}^{2}}\left( \left| {{B}^{*}} \right| \right)  \\
\end{matrix} \right]\ge O.\]
Consequently,
\[\left[ \begin{matrix}
   {{g}^{2}}\left( \left| A \right| \right)+{{g}^{2}}\left( \left| B \right| \right) & {{\left( A+B \right)}^{*}}  \\
   A+B & {{h}^{2}}\left( \left| {{A}^{*}} \right| \right)+{{h}^{2}}\left( \left| {{B}^{*}} \right| \right)  \\
\end{matrix} \right]\ge O,\]
and
\[\left[ \begin{matrix}
   \left( 1-v \right){{g}^{2}}\left( \left| A \right| \right)+v{{g}^{2}}\left( \left| B \right| \right) & {{\left( \left( 1-v \right)A+vB \right)}^{*}}  \\
   \left( 1-v \right)A+vB & \left( 1-v \right){{h}^{2}}\left( \left| {{A}^{*}} \right| \right)+v{{h}^{2}}\left( \left| {{B}^{*}} \right| \right)  \\
\end{matrix} \right]\ge O.\]
Employing Lemma \ref{3} (ii) completes the proof.
\end{proof}

From \eqref{10}, one can infer the following known inequality
\begin{equation}\label{5}
|||A+B|||^2\leq |||\;|A|^{2v}+|B|^{2v}|||\;|||\;|A^*|^{2(1-v)}+|B^*|^{2(1-v)}|||,0\leq v\leq 1,
\end{equation}
for any $A,B\in\mathcal{M}_n$ and any unitarily invariant norm $|||\cdot|||$.

Let $\Phi :{{\mathcal M}_{n}}\to {{\mathcal M}_{m}}$ a 2-positive linear map, let $Z\in {{\mathcal M}_{n}}$ and let $p\in \left( -\infty ,\infty  \right)$. Then, from \cite[Corollary 2.7]{rs}, there exists a unitary $V\in {{\mathcal M}_{m}}$ such that
	\[\left| \Phi \left( Z \right) \right|\le \Phi \left( {{\left| Z \right|}^{1+p}} \right)\sharp V\Phi \left( {{\left| {{Z}^{*}} \right|}^{1-p}} \right){{V}^{*}}.\]
Letting $Z=A\oplus B$ and $\Phi \left( A\oplus B \right)=A+B$, we get a considerable improvement of \eqref{5}, and we also see that it is not necessary to confine to $0\le v \le 1$.

The following special case deserves mentioning, where relations between $|f(T)|$ and $f(|\mathfrak{R}T|+|\mathfrak{I}T|)$ are presented. Here, $\mathfrak{I}T$ refers to the  imaginary part of $T$, defined by $\mathfrak{I}T=\frac{T-T^*}{2i}.$

\begin{corollary}
Let $T\in {{\mathcal M}_{n}}$ and let $f\in \mathcal L$. If $g, h$ are non-negative continuous functions on $\left[ 0,\infty  \right)$ satisfying $g\left( t \right)h\left( t \right)=t$, ($t\ge 0$), then
\[{{\left| f\left( T \right) \right|}^{2}}\le f\left( {{g}^{2}}\left( \left| \mathfrak RT \right| \right)+{{g}^{2}}\left( \left| \mathfrak IT \right| \right) \right)f\left( {{h}^{2}}\left( \left| \mathfrak RT \right| \right)+{{h}^{2}}\left( \left| \mathfrak IT \right| \right) \right).\]
In particular, for any $0\le t\le 1$,
\[{{\left| f\left( T \right) \right|}^{2}}\le f\left( {{\left| \mathfrak RT \right|}^{2t}}+{{\left| \mathfrak IT \right|}^{2t}} \right)f\left( {{\left| \mathfrak RT \right|}^{2\left( 1-t \right)}}+{{\left| \mathfrak IT \right|}^{2\left( 1-t \right)}} \right).\]
\end{corollary}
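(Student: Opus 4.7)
The plan is to apply inequality \eqref{10} of the preceding theorem with a careful choice of the two summands. Specifically, I would write $T=\mathfrak{R}T+i\,\mathfrak{I}T$ and set $A=\mathfrak{R}T$, $B=i\,\mathfrak{I}T$, so that $A+B=T$. With this identification, the whole content of the corollary is already packaged in \eqref{10}; the only work is to simplify the four moduli that appear there.

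The key computation is that both $\mathfrak{R}T$ and $\mathfrak{I}T$ are Hermitian. Hence $A$ is Hermitian, which yields $|A|=|A^*|=|\mathfrak{R}T|$. For $B=i\,\mathfrak{I}T$, one checks $B^*B=(-i\,\mathfrak{I}T)(i\,\mathfrak{I}T)=(\mathfrak{I}T)^2$ and similarly $BB^*=(\mathfrak{I}T)^2$, so that $|B|=|B^*|=|\mathfrak{I}T|$. Plugging these identifications into \eqref{10} gives exactly the first displayed inequality of the corollary.

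For the ``in particular'' statement, I would specialize by choosing $g(s)=s^{t}$ and $h(s)=s^{1-t}$ on $[0,\infty)$, where $t\in[0,1]$ is the parameter appearing in the conclusion. These are non-negative continuous functions with $g(s)h(s)=s$, so they are admissible in the first inequality, and the functional calculus then produces $g^{2}(|\mathfrak{R}T|)=|\mathfrak{R}T|^{2t}$, $h^{2}(|\mathfrak{R}T|)=|\mathfrak{R}T|^{2(1-t)}$, and similarly for $|\mathfrak{I}T|$; this yields the stated particular case.

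There is essentially no obstacle: once the decomposition $T=\mathfrak{R}T+i\,\mathfrak{I}T$ is in place, the inequality reduces to a direct application of \eqref{10}. The only point worth handling carefully is the modulus of $B=i\,\mathfrak{I}T$, where one must notice that the unimodular scalar $i$ cancels in both $B^*B$ and $BB^*$, so that $|B|$ and $|B^*|$ collapse to $|\mathfrak{I}T|$ rather than to something involving $\pm i$. No new positivity fact beyond \eqref{9} and Lemma \ref{3} is required.
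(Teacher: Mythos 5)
Your proposal is correct and follows essentially the same route as the paper: the paper also applies inequality \eqref{10} after replacing $B$ by $\mathrm{i}B$ with $A=\mathfrak{R}T$, $B=\mathfrak{I}T$, using exactly the observation that the factor $\mathrm{i}$ disappears in the moduli. Your explicit choice $g(s)=s^{t}$, $h(s)=s^{1-t}$ for the particular case is the intended (and correct) specialization.
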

\begin{proof}
If we replace $B$ by $\textup i B$, in  \eqref{10}, we get
\[{{\left| f\left( A+\textup iB \right) \right|}^{2}}\le f\left( {{g}^{2}}\left( \left| A \right| \right)+{{g}^{2}}\left( \left| B \right| \right) \right)f\left( {{h}^{2}}\left( \left| {{A}^{*}} \right| \right)+{{h}^{2}}\left( \left| {{B}^{*}} \right| \right) \right).\]
Now, if $T=A+\textup iB$ with $A=\frac{T+{{T}^{*}}}{2}=\mathfrak RT$ and $B=\frac{T-{{T}^{*}}}{2\textup i}=\mathfrak IT$, we get the desired result.
\end{proof}

As a direct consequence of inequality \eqref{11}, we have the following result; where $f(\mathfrak{R}T)$ and $f(\mathfrak{I}T)$ are compared as lower bounds of certain Lieb functions of $|T|$ and $|T^*|.$
\begin{corollary}
Let $T\in {{\mathcal M}_{n}}$ and let $f\in \mathcal L$. If $g, h$ are non-negative continuous functions on $\left[ 0,\infty  \right)$ satisfying $g\left( t \right)h\left( t \right)=t$, ($t\ge 0$), then
\[{{\left| f\left( \mathfrak RT \right) \right|}^{2}}\le f\left( \frac{{{g}^{2}}\left( \left| T \right| \right)+{{g}^{2}}\left( \left| {{T}^{*}} \right| \right)}{2} \right)f\left( \frac{{{h}^{2}}\left( \left| T \right| \right)+{{h}^{2}}\left( \left| {{T}^{*}} \right| \right)}{2} \right),\]
and
\[{{\left| f\left( \mathfrak IT \right) \right|}^{2}}\le f\left( \frac{{{g}^{2}}\left( \left| T \right| \right)+{{g}^{2}}\left( \left| {{T}^{*}} \right| \right)}{2} \right)f\left( \frac{{{h}^{2}}\left( \left| T \right| \right)+{{h}^{2}}\left( \left| {{T}^{*}} \right| \right)}{2} \right).\]
\end{corollary}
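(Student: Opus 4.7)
The plan is to apply inequality \eqref{11} with $v=\tfrac{1}{2}$ for two carefully chosen decompositions: one that produces $\mathfrak{R}T$ as the midpoint and one that produces $\mathfrak{I}T$.

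For the real part, I would take $A=T$ and $B=T^{*}$ in \eqref{11}, so that $\tfrac{1}{2}A+\tfrac{1}{2}B=\mathfrak{R}T$. The moduli entering the right-hand side of \eqref{11} are then $|A|=|T|$, $|B|=|T^{*}|$, $|A^{*}|=|T^{*}|$, and $|B^{*}|=|T|$, so the two convex combinations inside $f$ collapse to $\tfrac{1}{2}\bigl(g^{2}(|T|)+g^{2}(|T^{*}|)\bigr)$ and $\tfrac{1}{2}\bigl(h^{2}(|T|)+h^{2}(|T^{*}|)\bigr)$, respectively. Substituting these into \eqref{11} produces the first claimed inequality verbatim.

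For the imaginary part, I would write $\mathfrak{I}T=\frac{T-T^{*}}{2i}=\tfrac{1}{2}(-iT)+\tfrac{1}{2}(iT^{*})$ and apply \eqref{11} with $A=-iT$, $B=iT^{*}$, and $v=\tfrac{1}{2}$. The key observation here is that the unimodular factors $\pm i$ do not disturb any of the absolute values: a one-line computation gives $|-iT|^{2}=(-iT)^{*}(-iT)=T^{*}T$, hence $|{-iT}|=|T|$, and analogously $|iT^{*}|=|T^{*}|$, $|(-iT)^{*}|=|T^{*}|$, and $|(iT^{*})^{*}|=|T|$. Therefore, the right-hand side of \eqref{11} in this second application collapses to exactly the same expression that appeared in the real-part argument, yielding the second inequality.

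There is essentially no serious obstacle here; the corollary is an immediate specialization of \eqref{11}. The only point worth being explicit about is the unimodular-phase invariance of the absolute value, which is what lets the imaginary-part decomposition feed into an inequality originally stated for sums of arbitrary matrices without introducing any asymmetry on the right-hand side.
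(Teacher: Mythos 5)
Your proposal is correct and is exactly the route the paper intends: the corollary is obtained by specializing inequality \eqref{11} with $v=\tfrac12$, taking $A=T$, $B=T^{*}$ for the real part and the unimodularly rotated pair $A=-\mathrm{i}T$, $B=\mathrm{i}T^{*}$ (whose moduli again reduce to $|T|$ and $|T^{*}|$) for the imaginary part. No gaps.
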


In the following result, we see the action of a Lieb function on the geometric mean of two positive definite matrices. 
Of particular interest, researchers investigated how the geometric mean behaves under certain functions. For example, if $\Phi:\mathcal{M}_n\to\mathcal{M}_n$ is a unital positive linear mapping, it is known that  $\Phi(A\sharp B)\leq \Phi(A)\sharp \Phi(B);$ \cite{az}. In the following, we see similar results for Lieb functions. We state here that this result is well-known in the literature, but we include its proof for the reader's convenience.
\begin{remark}\label{8}
Let $A,B\in {{\mathcal M}_{n}}$ be two positive definite matrices and let $f\in \mathcal L$. Then
\[{{f}^{2}}\left( A\sharp B \right)\le f\left( A \right)f\left( B \right).\]
In particular, $f(A\sharp B)\leq f(A)\sharp f(B).$ Indeed, we know that (see \cite[(4.13)]{6})
\begin{equation}\label{6}
\left[ \begin{matrix}
   A & A\sharp B  \\
   A\sharp B & B  \\
\end{matrix} \right]\ge O.
\end{equation}
Now, we deduce the desired result by Lemma \ref{3} (ii).

From the above discussion, one can say that the restriction of a Lieb function on the class of positive definite matrices is a geometrically convex function \cite{tdnd}. 
\end{remark}

As an interesting consequence of Remark \ref{8}, we have the following result about log-convexity of the function $t\mapsto f(A\sharp_tB).$
\begin{corollary}
Let $A,B\in {{\mathcal M}_{n}}$ be two positive definite matrices and let $f\in \mathcal L$. Then the function
	\[g\left( t \right)=f\left( A{{\sharp}_{t}}B \right)\]
is log-convex on $\left[ 0,1 \right]$.
\end{corollary}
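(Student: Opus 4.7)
The plan is to reduce the claim to midpoint log-convexity and then invoke continuity. A continuous function $g:[0,1]\to[0,\infty)$ is log-convex precisely when it is midpoint log-convex, i.e., satisfies $g((s+t)/2)^{2}\le g(s)\,g(t)$ for all $s,t\in[0,1]$. The required continuity of $g(t)=f(A\sharp_{t}B)$ follows from the continuity of $f$ (which is built into Lemma \ref{3}) together with the continuity of the map $t\mapsto A\sharp_{t}B$ on $[0,1]$.

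The heart of the argument is the geodesic identity for the weighted geometric mean,
\[
(A\sharp_{s}B)\,\sharp\,(A\sharp_{t}B) \;=\; A\sharp_{(s+t)/2}B, \qquad s,t\in[0,1],
\]
which reflects the fact that $t\mapsto A\sharp_{t}B$ is a geodesic in the Riemannian symmetric-space structure on positive definite matrices, so its midpoint (in the geometric mean sense) between any two of its points lies on the curve at the parameter midpoint. Granted this identity, I would set $P=A\sharp_{s}B$ and $Q=A\sharp_{t}B$, both positive definite, and apply Remark \ref{8} to $P$ and $Q$ to obtain
\[
f^{2}\!\left((A\sharp_{s}B)\,\sharp\,(A\sharp_{t}B)\right) \;\le\; f(A\sharp_{s}B)\, f(A\sharp_{t}B).
\]
Substituting the geodesic identity into the left-hand side yields $g((s+t)/2)^{2}\le g(s)\,g(t)$, which is precisely midpoint log-convexity, and the proof concludes by passing from midpoint log-convexity to full log-convexity via the continuity established above.

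The only nontrivial ingredient is the geodesic identity displayed above, and this is the one step where I would expect to pause. It is classical (see, e.g., Chapter 4 of Bhatia's \emph{Positive Definite Matrices}), so I would simply cite it; if a self-contained verification were preferred, I would write $A\sharp_{t}B=A^{1/2}X^{t}A^{1/2}$ with $X=A^{-1/2}BA^{-1/2}$ and reduce the identity to a statement about powers of $X$, where it follows from the commuting-case computation $A^{1-s}B^{s}\sharp A^{1-t}B^{t}=A^{1-(s+t)/2}B^{(s+t)/2}$ after the appropriate change of variable. Everything else -- the applicability of Remark \ref{8}, the preservation of positive definiteness under $\sharp_{t}$, and the midpoint-to-full log-convexity passage -- is routine.
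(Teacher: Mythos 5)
Your proposal matches the paper's proof essentially verbatim: both verify midpoint log-convexity via the geodesic identity $(A\sharp_{s}B)\,\sharp\,(A\sharp_{t}B)=A\sharp_{(s+t)/2}B$ and then apply Remark \ref{8} (you use the squared form $f^{2}(P\sharp Q)\le f(P)f(Q)$, the paper the equivalent form $f(P\sharp Q)\le f(P)\sharp f(Q)$). If anything, you are slightly more careful than the paper in flagging the continuity needed to pass from midpoint log-convexity to log-convexity, a point the paper's proof leaves implicit.
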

\begin{proof}
We check the mid-log-convex condition. Let $0\le s,t\le 1$. Then 
	\[\begin{aligned}
   g\left( \frac{t+s}{2} \right)&=f\left( A{{\sharp}_{\frac{t+s}{2}}}B \right) \\ 
 & =f\left( \left( A{{\sharp}_{t}}B \right)\sharp \left( A{{\sharp}_{s}}B \right) \right) \\ 
 & \le f\left( A{{\sharp}_{t}}B \right)\sharp f\left( A{{\sharp}_{s}}B \right) \\ 
 & =f\left( t \right)\sharp f\left( s \right).  
\end{aligned}\]
This completes the proof.
\end{proof}

Now we present a convex generalization of condition (ii) in the definition of the class $\mathcal L$.
\begin{proposition}
Let $A,X,B\in {{\mathcal M}_{n}}$ and let $f\in \mathcal L$. Then for any $0\le t\le 1$,
\begin{equation}
\label{gencondii}|f\left( tB^*A+(1-t)A^*B \right)|^2\le f\left( tA^*A+(1-t)B^*B \right)f\left( (1-t)tA^*A+tB^*B \right).
\end{equation}
\end{proposition}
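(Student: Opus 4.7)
The plan is to exhibit a positive semidefinite $2 \times 2$ block matrix whose off-diagonal entry is $tB^*A + (1-t)A^*B$ and whose diagonal entries are $tA^*A + (1-t)B^*B$ and $(1-t)A^*A + tB^*B$, and then invoke Lemma \ref{3}(ii). (The symbol $X$ in the hypothesis appears to be a typographical artifact since it does not enter the stated inequality; only $A$ and $B$ need to be used.)

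First, I would start from the two automatically positive semidefinite Gram matrices
\[
\begin{bmatrix} A^*A & A^*B \\ B^*A & B^*B \end{bmatrix} = \begin{bmatrix} A^* \\ B^* \end{bmatrix}\begin{bmatrix} A & B\end{bmatrix} \ge O,
\qquad
\begin{bmatrix} B^*B & B^*A \\ A^*B & A^*A \end{bmatrix} = \begin{bmatrix} B^* \\ A^* \end{bmatrix}\begin{bmatrix} B & A\end{bmatrix} \ge O,
\]
the second being obtained from the first by interchanging $A$ and $B$.

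Next, since the positive semidefinite cone is convex, for $0 \le t \le 1$ the combination
\[
t\begin{bmatrix} A^*A & A^*B \\ B^*A & B^*B \end{bmatrix} + (1-t)\begin{bmatrix} B^*B & B^*A \\ A^*B & A^*A \end{bmatrix}
= \begin{bmatrix} tA^*A + (1-t)B^*B & tA^*B + (1-t)B^*A \\ tB^*A + (1-t)A^*B & (1-t)A^*A + tB^*B \end{bmatrix}
\]
is positive semidefinite. Setting $C = tB^*A + (1-t)A^*B$, one checks directly that $C^* = tA^*B + (1-t)B^*A$ coincides with the $(1,2)$ entry above, so this is precisely a block matrix of the form required by Lemma \ref{3}(ii).

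Finally, applying Lemma \ref{3}(ii) with this block matrix yields
\[
|f(tB^*A + (1-t)A^*B)|^2 \le f(tA^*A + (1-t)B^*B)\, f((1-t)A^*A + tB^*B),
\]
which is the claimed inequality. There is no real obstacle here beyond spotting the symmetrization trick of forming the two Gram matrices with $A,B$ swapped and averaging them; once the correct block matrix is in hand, the Lieb characterization in Lemma \ref{3} does all the remaining work.
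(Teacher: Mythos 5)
Your proof is correct and follows essentially the same route as the paper: form the two Gram matrices $\left[\begin{smallmatrix} A^*A & A^*B \\ B^*A & B^*B \end{smallmatrix}\right]$ and $\left[\begin{smallmatrix} B^*B & B^*A \\ A^*B & A^*A \end{smallmatrix}\right]$, take their convex combination, and apply Lemma \ref{3}(ii) to the resulting positive semidefinite block matrix. Your write-up is in fact slightly cleaner, since the paper's intermediate appeal to ``2-positivity'' of $f$ is unnecessary, and the factor $(1-t)t$ in the stated right-hand side is evidently a typo for $(1-t)$, which is exactly what your argument (and the paper's) actually establishes.
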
\begin{proof}
Denote 
$$T=\left[\begin{matrix}
A^*  \\
B^*  \\
\end{matrix} \right] \left[\begin{matrix}
A & B  \\
\end{matrix} \right]=\left[\begin{matrix}
A^*A & A^*B \\
B^*A& B^*B  \\
\end{matrix} \right]\quad \textrm{and} \quad S=\left[\begin{matrix}
B*  \\
A^*  \\
\end{matrix} \right] \left[\begin{matrix}
B & A  \\
\end{matrix} \right]=\left[\begin{matrix}
B^*B & B^*A \\
A^*B& A^*A  \\
\end{matrix} \right].$$
Then, $T, S\geq O$, $tT+(1-t)S\geq O$ for $0\leq t\leq 1$.
As $f$ is 2-positive, we get
\[\left[ \begin{matrix}
f\left(tA^*A+(1-t)B^*B) \right) & f\left( t(A^*B+(1-t)B^*A) \right)  \\
f\left( tB^*A+(1-t)A^*B) \right) & f\left( tB^*B+(1-t)A^*A\right)  \\
\end{matrix} \right]\ge O.\]
Employing Lemma \ref{3} (ii), we obtain 
\[|f\left( tB^*A+(1-t)A^*B \right)|^2\le f\left( tA^*A+(1-t)B^*B \right)f\left( (1-t)tA^*A+tB^*B \right),\] 
which completes the proof.
\end{proof}
\begin{remark}
	For $t=0$ or $t=1$, inequality \eqref{gencondii}  reduces to condition (ii) in the definition of the Lieb functions. For $t=\frac 12$, on the other hand, this yields 
	\begin{equation*}
\left| f\left( \mathfrak R\left({{B}^{*}}A \right)\right) \right|=\left|f\left( \frac12B^*A+\frac 12A^*B \right)\right|\le f\left( \frac 12A^*A+\frac 12B^*B \right),
	\end{equation*}
	which is an interesting gathering inequality for Lieb functions. 
	\end{remark}

Another attractive property of Lieb functions together with the geometric mean is the following. Notice that this result extends the inequality in Remark \ref{8}.
\begin{proposition}
			Let $A_1, A_2, C, B_1, B_2\in {{\mathcal M}_{n}}$ be such that $A_i,B_i>O$ and $ \left[\begin{matrix}
				A_j & C^*  \\
				C & B_j  \\
			\end{matrix} \right]\ge O.$ If $f\in \mathcal L,$ then 
			\[|f(C)|^2\leq f\left( A_1\sharp A_2 \right)f\left( B_1\sharp B_2 \right).\]
		\end{proposition}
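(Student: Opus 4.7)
The plan is to reduce the statement to a single block-positivity assertion and then invoke Lemma \ref{3}(ii). Concretely, I would first establish that
\[
\begin{pmatrix} A_1\sharp A_2 & C^* \\ C & B_1\sharp B_2 \end{pmatrix}\ge O,
\]
after which Lemma \ref{3}(ii) applied to this block matrix delivers $|f(C)|^2\le f(A_1\sharp A_2)\,f(B_1\sharp B_2)$ immediately, finishing the proof.

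To establish the block positivity, I would use the Schur complement criterion: since $B_j>O$, the hypothesis $\begin{pmatrix} A_j & C^*\\ C & B_j\end{pmatrix}\ge O$ is equivalent to $A_j\ge C^*B_j^{-1}C$ for $j=1,2$. Joint monotonicity of the geometric mean with respect to both entries then gives
\[
A_1\sharp A_2 \ge (C^*B_1^{-1}C)\sharp(C^*B_2^{-1}C).
\]
Next, applying the Ando-type inequality $\Phi(X\sharp Y)\le \Phi(X)\sharp \Phi(Y)$ (valid for every positive linear map, as in \cite{az}) to the positive linear map $\Phi(X)=C^*XC$ yields
\[
C^*(B_1^{-1}\sharp B_2^{-1})C \le (C^*B_1^{-1}C)\sharp(C^*B_2^{-1}C).
\]
Using the standard identity $(B_1\sharp B_2)^{-1}=B_1^{-1}\sharp B_2^{-1}$ and chaining the last two displays,
\[
C^*(B_1\sharp B_2)^{-1}C \le A_1\sharp A_2,
\]
which, since $B_1\sharp B_2>O$, is exactly the Schur complement condition for the block matrix above to be positive semidefinite.

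The step expected to be the main obstacle is invoking the transformer-type inequality $\Phi(X\sharp Y)\le \Phi(X)\sharp \Phi(Y)$ in the correct direction, which is crucial because it reverses the natural ``congruence'' appearing in $\Phi(X)=C^*XC$; however, this is classical and in fact already used in the discussion preceding Remark \ref{8}. All remaining ingredients---the Schur complement equivalence, operator monotonicity of $\sharp$, and the inversion identity $(B_1\sharp B_2)^{-1}=B_1^{-1}\sharp B_2^{-1}$---are routine, so once this inequality is in hand the proof is a short chain of inequalities followed by an application of Lemma \ref{3}(ii).
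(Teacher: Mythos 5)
Your proposal is correct, and at the top level it is the same reduction the paper uses: establish that $\left[\begin{smallmatrix} A_1\sharp A_2 & C^*\\ C & B_1\sharp B_2\end{smallmatrix}\right]\ge O$ and then apply Lemma \ref{3}(ii). The difference is in how that block positivity is obtained: the paper simply cites Ando's result \cite[Lemma 3.1]{8}, whereas you reprove it from scratch via the Schur complement criterion ($A_j\ge C^*B_j^{-1}C$), joint monotonicity of $\sharp$, the transformer inequality $C^*(X\sharp Y)C\le (C^*XC)\sharp(C^*YC)$, and the identity $(B_1\sharp B_2)^{-1}=B_1^{-1}\sharp B_2^{-1}$. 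This chain is valid and makes the proposition self-contained, at the cost of two small points you should phrase carefully: (a) $C^*B_j^{-1}C$ may be singular, so the geometric means appearing in your intermediate steps must be understood in the extended (limit or maximal-characterization) sense, under which monotonicity still holds; (b) the Ando--Zhan inequality in \cite{az} is stated for \emph{unital} positive linear maps, and $X\mapsto C^*XC$ is not unital in general --- the cleanest fix is to note that this map is completely positive, so congruence of the positive block matrix $\left[\begin{smallmatrix} B_1^{-1} & B_1^{-1}\sharp B_2^{-1}\\ B_1^{-1}\sharp B_2^{-1} & B_2^{-1}\end{smallmatrix}\right]$ by $\mathrm{diag}(C,C)$ together with the maximal characterization of the geometric mean gives the transformer inequality directly. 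With these remarks your argument buys independence from the citation, while the paper's version buys brevity.
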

	\begin{proof}
		By \cite[Lemma 3.1]{8}, we have $\left[\begin{matrix}
			A_1\sharp A_2 & C^*  \\
			C &  B_1\sharp B_2  \\
		\end{matrix} \right]\ge O.$ Then  Lemma \ref{3} implies
		\[|f(C)|^2\leq f\left( A_1\sharp A_2 \right)f\left( B_1\sharp B_2 \right),\]
		which completes the proof.
		\end{proof}
	Another consequence of block characterizations can be stated as follows.
\begin{proposition}\label{offblock}
		Let $A, B,  C \in {{\mathcal M}_{n}}$ be such that $ \left[\begin{matrix}
	A & C^*  \\
	C & B  \\
	\end{matrix} \right]\ge O$ and $f\in \mathcal L.$ Then 
	\[|f(C+C^*)|^2\leq f^2\left( A+B\right).\]
	\end{proposition}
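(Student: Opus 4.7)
The plan is to reduce the desired inequality to a direct application of Lemma~\ref{3}(ii). To do so, I need to produce a positive semidefinite block matrix of the form
\[
\left[\begin{matrix} A+B & (C+C^{*})^{*} \\ C+C^{*} & A+B \end{matrix}\right]\ge O,
\]
since then Lemma~\ref{3}(ii) applied with this block yields exactly $|f(C+C^{*})|^{2}\le f(A+B)\,f(A+B)=f^{2}(A+B)$. Note that $(C+C^{*})^{*}=C+C^{*}$, so the off-diagonal entries are the same self-adjoint matrix.

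To produce this block, I would start from the given positivity $\left[\begin{smallmatrix} A & C^{*} \\ C & B \end{smallmatrix}\right]\ge O$ and apply the flip unitary $U=\left[\begin{smallmatrix} O & I \\ I & O \end{smallmatrix}\right]$. Conjugation by $U$ preserves positivity and gives
\[
U^{*}\left[\begin{matrix} A & C^{*} \\ C & B \end{matrix}\right]U=\left[\begin{matrix} B & C \\ C^{*} & A \end{matrix}\right]\ge O.
\]
Summing this with the original block (sums of positive semidefinite matrices are positive semidefinite) yields
\[
\left[\begin{matrix} A+B & C^{*}+C \\ C+C^{*} & B+A \end{matrix}\right]\ge O,
\]
which is precisely the block I want.

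The final step is then a one-line invocation of Lemma~\ref{3}(ii) on this block, with $A$-slot and $B$-slot both equal to $A+B$ and the off-diagonal entry equal to $C+C^{*}$. There is really no main obstacle here: the whole argument hinges on the simple observation that the flip-conjugation trick symmetrizes the block, and the CS-type condition in Lemma~\ref{3}(ii) immediately delivers the claimed bound. The only thing to be mildly careful about is verifying $(C+C^{*})^{*}=C+C^{*}$ so that the block is genuinely of the Hermitian form required by Lemma~\ref{3}, which is trivial.
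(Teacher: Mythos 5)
Your proof is correct and coincides with the paper's own argument: both pass from the given block to the flipped block $\left[\begin{smallmatrix} B & C \\ C^{*} & A \end{smallmatrix}\right]\ge O$, add the two to obtain $\left[\begin{smallmatrix} A+B & C+C^{*} \\ C+C^{*} & A+B \end{smallmatrix}\right]\ge O$, and then apply Lemma~\ref{3}(ii). The only difference is that you make the flip explicit via conjugation by the swap unitary, which the paper leaves implicit.
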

	\begin{proof}
		Since $\left[\begin{matrix}
			A & C^*  \\
			C & B  \\
		\end{matrix} \right]\ge O,$ then $\left[\begin{matrix}
			B & C  \\
			C^* & A  \\
		\end{matrix} \right]\ge O$. Now, we deduce  from Lemma \ref{3} (ii) and the positive matrix $\left[\begin{matrix}
			A+B & C+C^*  \\
			C+C^* & B+A  \\
		\end{matrix} \right]$ that $|f(C+C^*)|^2\leq f^2\left( A+B\right).$
	\end{proof}
	
	This entails the following result about a Lieb function of the real part of a matrix.
\begin{corollary}\label{offblockT}
 Let $T\in  {{\mathcal M}_{n}}$  and $f\in \mathcal L.$ Then
 \[|f(T+T^*)|^2\leq f^2(|T|^2+I),\]
 and 
 \[|f(T+T^*)|^2\leq f^2(|T|+|T^*|).\]
Equivalently,
 \[|f(2\mathfrak RT)|\leq \min\{f(|T|^2+I), f(|T|+|T^*|)\}.\]
	\end{corollary}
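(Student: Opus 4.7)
The plan is to read this corollary as two separate applications of Proposition \ref{offblock}, followed by a cosmetic rewriting in terms of $\mathfrak RT$.

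First, I would exhibit two positive block matrices whose off-diagonal is $T^*$ (so that $C=T$ and $C+C^*=T+T^*=2\mathfrak RT$). For the first inequality, I use the block positivity
\[\left[\begin{matrix} |T|^2 & T^* \\ T & I \end{matrix}\right]\ge O,\]
which is the fact cited from \cite[Proposition 1.7.7]{7} in the remark following Corollary \ref{4}. For the second inequality, I use Lemma \ref{2}, which provides
\[\left[\begin{matrix} |T| & T^* \\ T & |T^*| \end{matrix}\right]\ge O.\]

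Next, I would apply Proposition \ref{offblock} to each block matrix. Taking $A=|T|^2$, $B=I$, $C=T$ in the first case yields
\[|f(T+T^*)|^2\le f^2(|T|^2+I),\]
and taking $A=|T|$, $B=|T^*|$, $C=T$ in the second case yields
\[|f(T+T^*)|^2\le f^2(|T|+|T^*|).\]

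Finally, since $T+T^*=2\mathfrak RT$, both inequalities read $|f(2\mathfrak RT)|^2$ on the left, so taking square roots and combining gives the claimed minimum. There is essentially no obstacle here; the only thing to verify is that the two block matrices used are indeed positive, and both are already recorded in the excerpt. The entire proof is therefore a direct invocation of Proposition \ref{offblock} with two different choices of diagonal blocks.
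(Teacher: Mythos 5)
Your proof is correct and is essentially the paper's own argument: apply Proposition \ref{offblock} to the two positive block matrices $\left[\begin{smallmatrix} |T| & T^* \\ T & |T^*| \end{smallmatrix}\right]$ and $\left[\begin{smallmatrix} |T|^2 & T^* \\ T & I \end{smallmatrix}\right]$ with $C=T$. The concluding step (square roots, using that $f\ge 0$ on positive semidefinite matrices, to get the minimum form) is routine and consistent with the paper.
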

\begin{proof}
	This follows from Proposition \ref{offblock} via 
	\[\left[ \begin{matrix}
	\left| T \right| & {{T}^{*}}  \\
	T & \left| {{T}^{*}} \right|  \\
	\end{matrix} \right]\ge O\qquad \textrm {and}\qquad \left[ \begin{matrix}
	{{\left| T \right|}^{2}} & {{T}^{*}}  \\
	T & I  \\
	\end{matrix} \right]\geq O.\]
\end{proof}
\begin{remark}
	If $T\in {{\mathcal M}_{n}}$ is normal, we infer from Corollary \ref{offblockT} that
	\[\left| f(T+T^*) \right|^2\le f^2\left( 2\left| T \right| \right).\]
	In particular, if $T$ is self-adjoint, then 
		\[\left| f(2T) \right|^2\le f^2\left( 2\left| T \right| \right).\]
	\end{remark}

\section{Mixed Cauchy-Schwarz inequality}
In this section, we elaborate more on the mixed CS inequality.
\begin{lemma}\label{04}
\cite[p. 45]{r9} Let $T \geq O$  and let $x,y$  be any vectors. Then
\[\left| \left\langle Tx,y \right\rangle  \right|{^2}\le \left\langle Tx,x \right\rangle \left\langle Ty,y \right\rangle.\]
\end{lemma}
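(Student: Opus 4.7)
The plan is to recognize Lemma \ref{04} as the classical Cauchy--Schwarz inequality for the positive semi-definite sesquilinear form $[x,y]:=\langle Tx,y\rangle$ on $\mathbb{C}^n$. Once framed this way, the inequality is immediate from the ordinary vector Cauchy--Schwarz inequality.

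The cleanest route is via the square root. Since $T\geq O$, functional calculus produces a positive semi-definite $S=T^{1/2}$ with $T=S^*S=S^2$. Under this factorization, $\langle Tx,y\rangle=\langle S^*Sx,y\rangle=\langle Sx,Sy\rangle$, while $\langle Tx,x\rangle=\|Sx\|^2$ and $\langle Ty,y\rangle=\|Sy\|^2$. Applying the standard Cauchy--Schwarz inequality in $\mathbb{C}^n$ to the vectors $Sx$ and $Sy$ yields $|\langle Sx,Sy\rangle|^2\leq \|Sx\|^2\,\|Sy\|^2$, which is exactly the desired bound.

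A self-contained alternative avoids the square root. For any $\lambda\in\mathbb{C}$, positivity of $T$ gives
\[
0\leq \langle T(x-\lambda y),\,x-\lambda y\rangle = \langle Tx,x\rangle-\lambda\,\overline{\langle Tx,y\rangle}-\bar\lambda\,\langle Tx,y\rangle+|\lambda|^2\langle Ty,y\rangle.
\]
If $\langle Ty,y\rangle>0$, the choice $\lambda=\langle Tx,y\rangle/\langle Ty,y\rangle$ reduces the right-hand side to $\langle Tx,x\rangle-|\langle Tx,y\rangle|^2/\langle Ty,y\rangle$, and the claim follows after rearrangement. If $\langle Ty,y\rangle=0$, the expression is affine in $\bar\lambda$ and can be nonnegative for every $\lambda\in\mathbb{C}$ only when $\langle Tx,y\rangle=0$, in which case the inequality is trivial.

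There is no substantive obstacle: this is a textbook fact and is cited here as a known lemma (see \cite{r9}). The only minor subtlety is the degenerate case $\langle Ty,y\rangle=0$ in the direct argument, which is one reason to prefer the square-root proof, where this case is absorbed automatically into the standard vector Cauchy--Schwarz step.
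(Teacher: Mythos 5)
Your proposal is correct. The paper itself offers no proof of this lemma --- it is simply quoted from Furuta's book \cite[p.~45]{r9} as a known fact --- so there is nothing in the paper to diverge from; both of your arguments (the factorization $T=S^2$ with $S=T^{1/2}\geq O$, giving $\langle Tx,y\rangle=\langle Sx,Sy\rangle$ followed by the ordinary Cauchy--Schwarz inequality, and the quadratic-in-$\lambda$ expansion of $\langle T(x-\lambda y),x-\lambda y\rangle\geq 0$) are the standard textbook proofs, and your treatment of the degenerate case $\langle Ty,y\rangle=0$ in the second route is handled correctly.
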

The following lemma will also be needed in our analysis.
\begin{lemma}\label{01}
(see \cite{04} and \cite[Lemma 1.1]{01}) For every block positive matrix in $\mathcal M_{n+m}$, the following decomposition holds
\[\left[ \begin{matrix}
   A & C  \\
   {{C}^{*}} & B  \\
\end{matrix} \right]=U\left[ \begin{matrix}
   A & O  \\
   O & O  \\
\end{matrix} \right]{{U}^{*}}+V\left[ \begin{matrix}
   O & O  \\
   O & B  \\
\end{matrix} \right]{{V}^{*}}\]
for some unitaries $U,V \in \mathcal M_{n+m}$.
\end{lemma}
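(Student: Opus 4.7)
The plan is to exploit the positive square root $M^{1/2}$ of $M:=\left[\begin{matrix} A & C \\ C^* & B\end{matrix}\right]$, together with the elementary fact that for any square matrix $T\in\mathcal M_{n+m}$, the positive semidefinite matrices $T^*T$ and $TT^*$ are unitarily equivalent (as one sees at once from the singular value decomposition $T=W_1\Sigma W_2^*$, which gives $T^*T=W_2\Sigma^2 W_2^*$ and $TT^*=W_1\Sigma^2 W_1^*$).

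First I would introduce the orthogonal projection $P:=\left[\begin{matrix} I_n & 0 \\ 0 & 0\end{matrix}\right]\in\mathcal M_{n+m}$, so that $I-P=\left[\begin{matrix} 0 & 0 \\ 0 & I_m\end{matrix}\right]$. Writing $M=M^{1/2}\,I\,M^{1/2}$ and splitting $I=P+(I-P)$ produces the key identity
\[
M \;=\; M^{1/2}PM^{1/2} \;+\; M^{1/2}(I-P)M^{1/2}.
\]

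Next I would set $T:=PM^{1/2}$ and $S:=(I-P)M^{1/2}$, both of which lie in $\mathcal M_{n+m}$. Using $P^*=P$ and $P^2=P$, a direct computation gives
\[
T^*T \;=\; M^{1/2}PM^{1/2}, \qquad TT^* \;=\; PMP \;=\; \left[\begin{matrix} A & 0 \\ 0 & 0\end{matrix}\right],
\]
and analogously $S^*S=M^{1/2}(I-P)M^{1/2}$ with $SS^*=(I-P)M(I-P)=\left[\begin{matrix} 0 & 0 \\ 0 & B\end{matrix}\right]$. Invoking the unitary equivalences $T^*T\sim TT^*$ and $S^*S\sim SS^*$ yields unitaries $U,V\in\mathcal M_{n+m}$ with $M^{1/2}PM^{1/2}=U\left[\begin{matrix} A & 0 \\ 0 & 0\end{matrix}\right]U^*$ and $M^{1/2}(I-P)M^{1/2}=V\left[\begin{matrix} 0 & 0 \\ 0 & B\end{matrix}\right]V^*$; plugging these back into the splitting of $M$ delivers the claimed decomposition.

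The only subtle point is the unitary equivalence step: it requires $T$ and $S$ to be square matrices of size exactly $n+m$, so that $T^*T$ and $TT^*$ live in the same space and share the same spectrum with multiplicity. This is precisely why the lemma is naturally phrased in $\mathcal M_{n+m}$ and why one dilates $A$ and $B$ to this common size by padding with zeros; without that dilation the equivalence is unavailable. Everything else is routine matrix algebra built around the identity $P+(I-P)=I$.
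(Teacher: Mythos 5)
Your proof is correct: the splitting $M=M^{1/2}PM^{1/2}+M^{1/2}(I-P)M^{1/2}$ together with the unitary equivalence of $T^*T$ and $TT^*$ for the square matrices $T=PM^{1/2}$ and $S=(I-P)M^{1/2}$ gives exactly the stated decomposition. The paper itself offers no proof (it quotes the lemma from the cited works of Bourin, Lee, and Lin), and your argument is precisely the standard one given in those references, so there is nothing to add.
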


The next lemma has been given in the proof of \cite[ Theorem 1]{4}.
\begin{lemma}\label{03}
Let $T\in {{\mathcal M}_{n}}$. If $f,g$ are non-negative continuous functions on $\left[ 0,\infty  \right)$ satisfying $f\left( t \right)g\left( t \right)=t$, ($t\ge 0$), then
\[\left[ \begin{matrix}
   {{f}^{2}}\left( \left| T \right| \right) & {{T}^{*}}  \\
   T & {{g}^{2}}\left( \left| {{T}^{*}} \right| \right)  \\
\end{matrix} \right]\ge O.\]
\end{lemma}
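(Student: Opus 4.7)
The statement of Lemma \ref{03} is essentially a restatement of inequality \eqref{9} that already appeared in the proof of Theorem \ref{thm_|T|}, so the proof should be a short combination of Lemmas \ref{1} and \ref{2}. The plan is to apply Lemma \ref{1} to the block matrix provided by Lemma \ref{2}, with the choice $A=|T|$, $B=|T^*|$, $C=T$, and with the functions in Lemma \ref{1} specialized to the pair $(f,g)$ from the hypothesis (satisfying $f(t)g(t)=t$).

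First I would verify the intertwining hypothesis $BC=CA$ in Lemma \ref{1}, that is, $|T^*|T=T|T|$. This rests on the elementary identity
\[T|T|^{2}=T(T^{*}T)=(TT^{*})T=|T^{*}|^{2}T,\]
together with uniqueness of positive square roots (or, equivalently, the polar decomposition $T=U|T|$, which gives $T|T|=U|T|^{2}$ and $|T^{*}|T=|T^{*}|U|T|=U|T|\cdot|T|=U|T|^{2}$). Thus the intertwining relation needed for Lemma \ref{1} is indeed satisfied by $A=|T|$, $B=|T^*|$, $C=T$.

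Next I would invoke Lemma \ref{2} to record the block positivity
\[\left[\begin{matrix}|T| & T^{*}\\ T & |T^{*}|\end{matrix}\right]\ge O.\]
With the intertwining relation in hand, Lemma \ref{1} is applicable to this positive block matrix, taking its functions to be our $f$ and $g$ (which by hypothesis are non-negative, continuous on $[0,\infty)$, and satisfy $f(t)g(t)=t$). The conclusion of Lemma \ref{1} then reads exactly
\[\left[\begin{matrix}f^{2}(|T|) & T^{*}\\ T & g^{2}(|T^{*}|)\end{matrix}\right]\ge O,\]
which is the desired inequality.

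Since every nontrivial step is already packaged inside the cited lemmas, there is no real obstacle: the only point that requires care is the commutation relation $|T^*|T=T|T|$, and even that is standard. No separate calculation is needed beyond invoking Lemmas \ref{1} and \ref{2} in sequence.
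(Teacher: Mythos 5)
Your proposal is correct and matches the paper's route: the paper takes this lemma from the proof of Kittaneh's Theorem~1 (reference \cite{4}) and in fact rederives it in exactly your way when establishing inequality \eqref{9} in the proof of Theorem \ref{thm_|T|}, namely by checking the intertwining $T|T|=|T^*|T$ from $T|T|^2=|T^*|^2T$ and then applying Lemma \ref{1} to the positive block matrix of Lemma \ref{2}. No gaps; your polar-decomposition justification of the intertwining is, if anything, more detailed than the paper's.
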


Now we present the following mixed CS inequality.

\begin{theorem}\label{02}
Let $A,B,C \in \mathcal M_{n}$ be such that $A,B\geq O$. If  $\left[ \begin{matrix}
   A & {{C}^{*}}  \\
   C & B  \\
\end{matrix} \right]\ge O$, then for any vectors $x,y\in\mathbb{C}^n,$ the following holds true
{\footnotesize	
\[{{\left| \left\langle Cx,y \right\rangle  \right|}^{2}}\le \left\langle \left( U\left[ \begin{matrix}
   A & O  \\
   O & O  \\
\end{matrix} \right]{{U}^{*}}+V\left[ \begin{matrix}
   O & O  \\
   O & B  \\
\end{matrix} \right]{{V}^{*}} \right)\left[ \begin{matrix}
   x  \\
   0  \\
\end{matrix} \right],\left[ \begin{matrix}
   x  \\
   0  \\
\end{matrix} \right] \right\rangle \left\langle \left( U\left[ \begin{matrix}
   A & O  \\
   O & O  \\
\end{matrix} \right]{{U}^{*}}+V\left[ \begin{matrix}
   O & O  \\
   O & B  \\
\end{matrix} \right]{{V}^{*}} \right)\left[ \begin{matrix}
   0  \\
   y  \\
\end{matrix} \right],\left[ \begin{matrix}
   0  \\
   y  \\
\end{matrix} \right] \right\rangle \]}
for some unitaries $U,V \in \mathcal M_{2n}$,
\end{theorem}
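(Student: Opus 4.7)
The plan is to recognize that the awkward-looking matrix $M := U\bigl[\begin{smallmatrix} A & O \\ O & O\end{smallmatrix}\bigr]U^* + V\bigl[\begin{smallmatrix} O & O \\ O & B\end{smallmatrix}\bigr]V^*$ appearing on the right-hand side is, by Lemma \ref{01}, nothing other than the original block matrix $\bigl[\begin{smallmatrix} A & C^* \\ C & B\end{smallmatrix}\bigr]$ itself. Once that is noted, the inequality becomes a direct application of the generalized Cauchy-Schwarz inequality for a positive operator (Lemma \ref{04}) to the positive block matrix, with a clever choice of test vectors.

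Concretely, I would proceed as follows. First, invoke Lemma \ref{01} to obtain unitaries $U, V \in \mathcal{M}_{2n}$ with
\[
\left[\begin{matrix} A & C^* \\ C & B \end{matrix}\right] \;=\; U\left[\begin{matrix} A & O \\ O & O \end{matrix}\right]U^* + V\left[\begin{matrix} O & O \\ O & B \end{matrix}\right]V^* \;=\; M.
\]
Next, introduce the augmented vectors $\tilde{x} = \bigl[\begin{smallmatrix} x \\ 0 \end{smallmatrix}\bigr]$ and $\tilde{y} = \bigl[\begin{smallmatrix} 0 \\ y \end{smallmatrix}\bigr]$ in $\mathbb{C}^{2n}$. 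A direct block computation gives
\[
\left\langle M \tilde{x}, \tilde{y}\right\rangle = \left\langle \left[\begin{matrix} A & C^* \\ C & B \end{matrix}\right]\left[\begin{matrix} x \\ 0 \end{matrix}\right], \left[\begin{matrix} 0 \\ y \end{matrix}\right]\right\rangle = \left\langle \left[\begin{matrix} Ax \\ Cx \end{matrix}\right], \left[\begin{matrix} 0 \\ y \end{matrix}\right]\right\rangle = \left\langle Cx, y\right\rangle,
\]
so the target scalar $\langle Cx, y\rangle$ is expressed as a single inner product involving the positive operator $M$.

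Finally, apply Lemma \ref{04} to the positive matrix $T = M$ with the vectors $\tilde{x}, \tilde{y}$:
\[
\left|\left\langle M \tilde{x}, \tilde{y}\right\rangle\right|^2 \le \left\langle M \tilde{x}, \tilde{x}\right\rangle \left\langle M \tilde{y}, \tilde{y}\right\rangle.
\]
Substituting the identification from the previous step on the left, and substituting the decomposition of $M$ from Lemma \ref{01} on the right, yields exactly the claimed inequality.

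There is no real obstacle: the only subtlety is seeing that the role of the decomposition in Lemma \ref{01} is bookkeeping---it rewrites the block matrix without altering its action---so that the statement is really just the operator Cauchy-Schwarz inequality in disguise, applied to vectors engineered so that one off-diagonal block $C$ is selected out of the four blocks. If one wished to make the theorem informative (rather than a tautological rewriting), one would eventually want to bound $\langle M \tilde{x}, \tilde{x}\rangle$ in terms of $A$ alone, etc., using the supports of $\tilde{x}$ and $\tilde{y}$ and the unitary conjugations---but that refinement is beyond what the statement asks.
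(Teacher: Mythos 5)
Your proof is correct and follows essentially the same route as the paper: identify the decomposed matrix with the original positive block matrix via Lemma \ref{01}, note that $\langle M\tilde{x},\tilde{y}\rangle=\langle Cx,y\rangle$ for $\tilde{x}=\bigl[\begin{smallmatrix}x\\0\end{smallmatrix}\bigr]$, $\tilde{y}=\bigl[\begin{smallmatrix}0\\y\end{smallmatrix}\bigr]$, and apply the operator Cauchy--Schwarz inequality of Lemma \ref{04}. Your write-up is in fact slightly cleaner, as it makes explicit the identification $\langle M\tilde{x},\tilde{y}\rangle=\langle Cx,y\rangle$ that the paper leaves implicit.
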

\begin{proof}
Let $x,y$ be any vectors. Assume that $\left[ \begin{matrix}
   A & {{C}^{*}}  \\
   C & B  \\
\end{matrix} \right]\ge O$. Utilizing Lemmas \ref{01} and \ref{04}, we have
{\small		\[\begin{aligned}
	& \left| \left\langle \left[ \begin{matrix}
	A & {{C}^{*}}  \\
	C & B  \\
	\end{matrix} \right]\left[ \begin{matrix}
	x  \\
	0  \\
	\end{matrix} \right],\left[ \begin{matrix}
	0  \\
	y  \\
	\end{matrix} \right] \right\rangle  \right|^2 \\ 
	& =\left| \left\langle \left( U\left[ \begin{matrix}
	A & O  \\
	O & O  \\
	\end{matrix} \right]{{U}^{*}}+V\left[ \begin{matrix}
	O & O  \\
	O & B  \\
	\end{matrix} \right]{{V}^{*}} \right)\left[ \begin{matrix}
	x  \\
	0  \\
	\end{matrix} \right],\left[ \begin{matrix}
	0  \\
	y  \\
	\end{matrix} \right] \right\rangle  \right| \\ 
	& \le \left\langle \left( U\left[ \begin{matrix}
	A & O  \\
	O & O  \\
	\end{matrix} \right]{{U}^{*}}+V\left[ \begin{matrix}
	O & O  \\
	O & B  \\
	\end{matrix} \right]{{V}^{*}} \right)\left[ \begin{matrix}
	x  \\
	0  \\
	\end{matrix} \right],\left[ \begin{matrix}
	x \\
	0  \\
	\end{matrix} \right] \right\rangle \left\langle \left( U\left[ \begin{matrix}
	A & O  \\
	O & O  \\
	\end{matrix} \right]{{U}^{*}}+V\left[ \begin{matrix}
	O & O  \\
	O & B  \\
	\end{matrix} \right]{{V}^{*}} \right)\left[ \begin{matrix}
	0  \\
	y  \\
	\end{matrix} \right],\left[ \begin{matrix}
	0 \\
	y  \\
	\end{matrix} \right] \right\rangle 	 
	\end{aligned}\]}
for some unitaries $U,V \in \mathcal M_{2n}$, which completes the proof.
\end{proof}

%\begin{lemma}\label{4}
%\cite[Lemma 3.1]{3} Let $A,B,C \in \mathcal M_{n}$ such that $A,B\in \mathcal M_{n}$ are positive. Then 
%\[\left[ \begin{matrix}
%   {{A}_{i}} & C  \\
%   {{C}^{*}} & {{B}_{i}}  \\
%\end{matrix} \right]\ge O\text{ }\left( i=1,2 \right)\Rightarrow \left[ \begin{matrix}
%   {{A}_{1}}\sharp{{A}_{2}} & C  \\
%	   {{C}^{*}} & {{B}_{1}}\sharp{{B}_{2}}  \\
%\end{matrix} \right]\ge O.\]
%\end{lemma}

\begin{theorem}\label{12}
Let $T \in \mathcal M_{n}$ and let $x,y\in\mathbb{C}^n$  be any vectors. If $f,g$ are non-negative continuous functions on $\left[ 0,\infty  \right)$ satisfying $f\left( t \right)g\left( t \right)=t$, ($t\ge 0$), then
{\tiny		
\[\begin{aligned}
  & {{\left| \left\langle Tx,y \right\rangle  \right|}^{2}} \\ 
 & \le \left\langle \left( U\left[ \begin{matrix}
   {{f}^{2}}\left( \left| T \right| \right) & O  \\
   O & O  \\
\end{matrix} \right]{{U}^{*}}+V\left[ \begin{matrix}
   O & O  \\
   O & {{g}^{2}}\left( \left| {{T}^{*}} \right| \right)  \\
\end{matrix} \right]{{V}^{*}} \right)\left[ \begin{matrix}
   x  \\
   0  \\
\end{matrix} \right],\left[ \begin{matrix}
   x  \\
   0  \\
\end{matrix} \right] \right\rangle \left\langle \left( U\left[ \begin{matrix}
   {{f}^{2}}\left( \left| T \right| \right) & O  \\
   O & O  \\
\end{matrix} \right]{{U}^{*}}+V\left[ \begin{matrix}
   O & O  \\
   O & {{g}^{2}}\left( \left| {{T}^{*}} \right| \right)  \\
\end{matrix} \right]{{V}^{*}} \right)\left[ \begin{matrix}
   0  \\
   y  \\
\end{matrix} \right],\left[ \begin{matrix}
   0  \\
   y  \\
\end{matrix} \right] \right\rangle
\end{aligned}\]}
for some unitaries $U,V \in \mathcal M_{2n}$. In particular,
{\tiny		
\[\begin{aligned}
  & {{\left| \left\langle Tx,x \right\rangle  \right|}^{2}} \\ 
 & \le \left\langle \left( U\left[ \begin{matrix}
   {{f}^{2}}\left( \left| T \right| \right) & O  \\
   O & O  \\
\end{matrix} \right]{{U}^{*}}+V\left[ \begin{matrix}
   O & O  \\
   O & {{g}^{2}}\left( \left| {{T}^{*}} \right| \right)  \\
\end{matrix} \right]{{V}^{*}} \right)\left[ \begin{matrix}
   x  \\
   0  \\
\end{matrix} \right],\left[ \begin{matrix}
   x  \\
   0  \\
\end{matrix} \right] \right\rangle \left\langle \left( U\left[ \begin{matrix}
   {{f}^{2}}\left( \left| T \right| \right) & O  \\
   O & O  \\
\end{matrix} \right]{{U}^{*}}+V\left[ \begin{matrix}
   O & O  \\
   O & {{g}^{2}}\left( \left| {{T}^{*}} \right| \right)  \\
\end{matrix} \right]{{V}^{*}} \right)\left[ \begin{matrix}
   0  \\
   x  \\
\end{matrix} \right],\left[ \begin{matrix}
   0  \\
   x  \\
\end{matrix} \right] \right\rangle.
\end{aligned}\]
}
\end{theorem}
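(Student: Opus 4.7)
The plan is to obtain Theorem \ref{12} as a direct specialization of Theorem \ref{02}, using the block positivity supplied by Lemma \ref{03}. First, I would set $A := f^{2}(|T|)$, $B := g^{2}(|T^{*}|)$, and $C := T$. By functional calculus applied to the positive matrices $|T|$ and $|T^{*}|$, the non-negativity of $f$ and $g$ ensures that $A, B \ge O$. Moreover, the hypothesis $f(t)g(t) = t$ is exactly what is required by Lemma \ref{03}, which then furnishes the block positivity
\[\left[\begin{matrix} A & C^{*} \\ C & B \end{matrix}\right] = \left[\begin{matrix} f^{2}(|T|) & T^{*} \\ T & g^{2}(|T^{*}|) \end{matrix}\right] \ge O.\]

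With these identifications, the hypotheses of Theorem \ref{02} are met, and its conclusion delivers precisely the stated inequality. Indeed, the left-hand side $|\langle Cx, y\rangle|^{2}$ becomes $|\langle Tx, y\rangle|^{2}$ (since, as a quick block computation confirms, evaluating the $2 \times 2$ block matrix between the augmented vectors $[x\;0]^{\top}$ and $[0\;y]^{\top}$ isolates the off-diagonal block $C$), while the right-hand side matches the expression claimed in Theorem \ref{12}, with the unitaries $U, V \in \mathcal{M}_{2n}$ being those produced by Lemma \ref{01} when applied to the block matrix above. For the ``in particular'' assertion, I would simply substitute $y = x$.

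The main step, more conceptual than technical, is recognizing that Lemma \ref{03} delivers the precise block-positive matrix needed to feed Theorem \ref{02}; there is no genuine obstacle beyond this identification, as the result is essentially a specialization of the general mixed Cauchy--Schwarz framework to the canonical $2 \times 2$ block associated with the decomposition $T = f(|T|) \cdot g(|T^{*}|)$-type factorization encoded in Lemma \ref{03}.
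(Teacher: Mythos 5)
Your proposal is correct and coincides with the paper's own argument: the paper proves Theorem \ref{12} precisely by combining Lemma \ref{03} (which yields the positivity of the block matrix with diagonal entries $f^{2}(|T|)$, $g^{2}(|T^{*}|)$ and off-diagonal entries $T^{*}$, $T$) with Theorem \ref{02}, and the special case follows by taking $y=x$. No gaps.
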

\begin{proof}
We infer the desired result by combining Theorem \ref{02} with Lemma \ref{03}.
\end{proof}

\begin{remark}\label{13}
It follows from \cite[Corollary 2.1]{01} and Lemma \ref{03} that
\[\left[ \begin{matrix}
   {{f}^{2}}\left( \left| T \right| \right) & {{T}^{*}}  \\
   T & {{g}^{2}}\left( \left| {{T}^{*}} \right| \right)  \\
\end{matrix} \right]=U\left[ \begin{matrix}
   \frac{{{f}^{2}}\left( \left| T \right| \right)+{{g}^{2}}\left( \left| {{T}^{*}} \right| \right)}{2}+\mathfrak RT & O  \\
   O & O  \\
\end{matrix} \right]{{U}^{*}}+V\left[ \begin{matrix}
   O & O  \\
   O & \frac{{{f}^{2}}\left( \left| T \right| \right)+{{g}^{2}}\left( \left| {{T}^{*}} \right| \right)}{2}-\mathfrak RT  \\
\end{matrix} \right]{{V}^{*}}\]
for some unitaries $U,V \in \mathcal M_{2n}$. In particular, if $T$ is self-adjoint, we have
\[\left[ \begin{matrix}
   {{f}^{2}}\left( \left| T \right| \right) & {{T}}  \\
   T & {{g}^{2}}\left( \left| {{T}} \right| \right)  \\
\end{matrix} \right]=U\left[ \begin{matrix}
   \frac{{{f}^{2}}\left( \left| T \right| \right)+{{g}^{2}}\left( \left| {{T}} \right| \right)}{2}+2T & O  \\
   O & O  \\
\end{matrix} \right]{{U}^{*}}+V\left[ \begin{matrix}
   O & O  \\
   O & \frac{{{f}^{2}}\left( \left| T \right| \right)+{{g}^{2}}\left( \left| {{T}} \right| \right)}{2}-2T  \\
\end{matrix} \right]{{V}^{*}}.\]
\end{remark}

We present the following interesting inequalities among $T$ and its real and imaginary parts. 

\begin{corollary}\label{nee1}
Let $T \in \mathcal M_{n}$. If $f,g$ are non-negative continuous functions on $\left[ 0,\infty  \right)$ satisfying $f\left( t \right)g\left( t \right)=t$, ($t\ge 0$), then
\[\left\| T \right\|\le \frac{1}{2}\left( \left\| \frac{{{f}^{2}}\left( \left| T \right| \right)+{{g}^{2}}\left( \left| {{T}^{*}} \right| \right)}{2}+\mathfrak RT \right\|+\left\| \frac{{{f}^{2}}\left( \left| T \right| \right)+{{g}^{2}}\left( \left| {{T}^{*}} \right| \right)}{2}-\mathfrak RT \right\| \right).\]
\end{corollary}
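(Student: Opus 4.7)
The plan is to combine the decomposition furnished by Remark~\ref{13} with an auxiliary operator-norm bound on the off-diagonal block of a positive semidefinite block matrix. Set $A=f^{2}(|T|)$, $B=g^{2}(|T^{*}|)$, $P=\tfrac{A+B}{2}+\mathfrak RT$, and $Q=\tfrac{A+B}{2}-\mathfrak RT$, so that the corollary becomes $\|T\|\le\tfrac12(\|P\|+\|Q\|)$. By Lemma~\ref{03} the block matrix $M:=\left[\begin{matrix} A & T^{*}\\ T & B \end{matrix}\right]\ge O$, and Remark~\ref{13} supplies unitaries $U,V\in\mathcal M_{2n}$ with
\[M=U\left[\begin{matrix} P & O\\ O & O \end{matrix}\right]U^{*}+V\left[\begin{matrix} O & O\\ O & Q \end{matrix}\right]V^{*}.\]
Since unitary conjugation preserves the operator norm, the two summands have norms $\|P\|$ and $\|Q\|$, and the triangle inequality yields $\|M\|\le\|P\|+\|Q\|$.

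The key step is the auxiliary inequality: for every $N=\left[\begin{matrix} X & Y^{*}\\ Y & Z \end{matrix}\right]\ge O$, one has $\|Y\|\le\tfrac12\|N\|$. To prove it, I would fix unit vectors $x,y\in\mathbb C^{n}$, choose $\lambda\in\mathbb C$ with $|\lambda|=1$ so that $\bar\lambda\langle Yx,y\rangle=|\langle Yx,y\rangle|$, and expand the quadratic form of $N$ on the unit vectors $z_{\pm}=\tfrac{1}{\sqrt2}\left[\begin{matrix} x\\ \pm\lambda y \end{matrix}\right]$; a direct calculation gives
\[\langle Nz_{\pm},z_{\pm}\rangle=\tfrac12\bigl(\langle Xx,x\rangle+\langle Zy,y\rangle\bigr)\pm|\langle Yx,y\rangle|.\]
Subtracting these two identities, $2|\langle Yx,y\rangle|=\langle Nz_{+},z_{+}\rangle-\langle Nz_{-},z_{-}\rangle\le\|N\|-0=\|N\|$, and taking the supremum over unit $x,y$ delivers the claim.

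Applying this auxiliary bound with $Y=T$ and combining it with $\|M\|\le\|P\|+\|Q\|$ gives $\|T\|\le\tfrac12\|M\|\le\tfrac12(\|P\|+\|Q\|)$, which is exactly the stated corollary. The main obstacle is precisely the factor of $\tfrac12$ in the off-diagonal block estimate: any naive splitting of $|\langle Tx,y\rangle|$ through the two summands of the Remark~\ref{13} decomposition followed by a term-by-term Cauchy--Schwarz (in the style of Theorem~\ref{02}) only delivers $\|T\|\le\|P\|+\|Q\|$. The polarization-style choice of unit vectors $z_{\pm}$ that spill into \emph{both} coordinates is what closes this factor-of-two gap.
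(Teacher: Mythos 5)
Your proof is correct and follows essentially the paper's own route: both arguments combine the positivity of $M=\left[\begin{smallmatrix} f^{2}(|T|) & T^{*}\\ T & g^{2}(|T^{*}|)\end{smallmatrix}\right]$ from Lemma~\ref{03}, the decomposition of Remark~\ref{13} together with unitary invariance and the triangle inequality, and the off-diagonal estimate $2\|T\|\le\|M\|$. The only difference is how that last estimate is justified—the paper invokes the order relation $M\ge 2\left[\begin{smallmatrix} O & T^{*}\\ T & O\end{smallmatrix}\right]$ (together with its counterpart with $-T$), whereas you reprove it directly by testing the quadratic form of $M$ on the unit vectors $z_{\pm}$—a harmless variation of the same step.
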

\begin{proof}
We know that if $M=\left[ \begin{matrix}
   A & C^{*}  \\
   {{C}} & B  \\
\end{matrix} \right]\ge O$, then 
$M\ge 2N$, where $N=\left[ \begin{matrix}
   O & C^{*}  \\
   {{C}} & O  \\
\end{matrix} \right]$. Now by taking into account Remark \ref{13}, we get
\[\begin{aligned}
   2\left\| T \right\|&=2\left\| \left[ \begin{matrix}
   O & {{T}^{*}}  \\
   T & O  \\
\end{matrix} \right] \right\| \\ 
 & \le \left\| \left[ \begin{matrix}
   {{f}^{2}}\left( \left| T \right| \right) & {{T}^{*}}  \\
   T & {{g}^{2}}\left( \left| {{T}^{*}} \right| \right)  \\
\end{matrix} \right] \right\| \\ 
 & = \left\| \left[ \begin{matrix}
   \frac{{{f}^{2}}\left( \left| T \right| \right)+{{g}^{2}}\left( \left| {{T}^{*}} \right| \right)}{2}+\mathfrak RT & O  \\
   O & O  \\
\end{matrix} \right] \right\|+\left\| \left[ \begin{matrix}
   O & O  \\
   O & \frac{{{f}^{2}}\left( \left| T \right| \right)+{{g}^{2}}\left( \left| {{T}^{*}} \right| \right)}{2}-\mathfrak RT  \\
\end{matrix} \right] \right\| \\ 
 & =\left\| \frac{{{f}^{2}}\left( \left| T \right| \right)+{{g}^{2}}\left( \left| {{T}^{*}} \right| \right)}{2}+\mathfrak RT \right\|+\left\| \frac{{{f}^{2}}\left( \left| T \right| \right)+{{g}^{2}}\left( \left| {{T}^{*}} \right| \right)}{2}-\mathfrak RT \right\|.  
\end{aligned}\]
This completes the proof.
\end{proof}

A special case of Corollary \ref{nee1} says that
\[\left\| T \right\|\le \frac{1}{4}\left( \left\| \left| T \right|+\left| {{T}^{*}} \right|+2\mathfrak RT \right\|+\left\| \left| T \right|+\left| {{T}^{*}} \right|-2\mathfrak RT \right\| \right).\]

In the following, we have a mixed CS-type inequality for the real and imaginary parts of $T\in\mathcal{M}_n$. The importance of this result can be seen, for example, in Remark \ref{rem_imre} below.
\begin{theorem}\label{14}
Let $T\in {{\mathcal M}_{n}}$. If $f,g$ are non-negative continuous functions on $\left[ 0,\infty  \right)$ satisfying $f\left( t \right)g\left( t \right)=t$, ($t\ge 0$), then for any vectors $x,y\in\mathbb{C}^n$,
\[\left| \left\langle \mathfrak RTx,y \right\rangle  \right|\le \frac{1}{2}\sqrt{\left\langle \left( {{f}^{2}}\left( \left| T \right| \right)+{{f}^{2}}\left( \left| {{T}^{*}} \right| \right) \right)x,x \right\rangle \left\langle \left( {{g}^{2}}\left( \left| T \right| \right)+{{g}^{2}}\left( \left| {{T}^{*}} \right| \right) \right)y,y \right\rangle },\]
and
\[\left| \left\langle \mathfrak ITx,y \right\rangle  \right|\le \frac{1}{2}\sqrt{\left\langle \left( {{f}^{2}}\left( \left| T \right| \right)+{{f}^{2}}\left( \left| {{T}^{*}} \right| \right) \right)x,x \right\rangle \left\langle \left( {{g}^{2}}\left( \left| T \right| \right)+{{g}^{2}}\left( \left| {{T}^{*}} \right| \right) \right)y,y \right\rangle }.\]
\end{theorem}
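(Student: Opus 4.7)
The plan is to build a single $2\times 2$ block positive matrix whose off-diagonal blocks are proportional to $\mathfrak{R}T$ (and $\mathfrak{I}T$) and whose diagonal blocks are $f^2(|T|)+f^2(|T^*|)$ and $g^2(|T|)+g^2(|T^*|)$, respectively, and then apply the scalar Cauchy--Schwarz inequality of Lemma \ref{04} to this matrix with the ``stacked'' vectors $\left[\begin{smallmatrix} x\\ 0\end{smallmatrix}\right]$ and $\left[\begin{smallmatrix} 0\\ y\end{smallmatrix}\right]$.

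Concretely, I would first apply Lemma \ref{03} to $T$ to get
\[
\left[\begin{matrix} f^2(|T|) & T^* \\ T & g^2(|T^*|)\end{matrix}\right]\ge O,
\]
and then apply the same lemma to $T^*$ (noting $|T^*|$ and $||T^*|^*|=|T|$) to obtain
\[
\left[\begin{matrix} f^2(|T^*|) & T \\ T^* & g^2(|T|)\end{matrix}\right]\ge O.
\]
Adding these two positive semidefinite matrices yields
\[
\left[\begin{matrix} f^2(|T|)+f^2(|T^*|) & T+T^* \\ T+T^* & g^2(|T|)+g^2(|T^*|)\end{matrix}\right]\ge O,
\]
whose off-diagonal entries are exactly $2\mathfrak{R}T$. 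Calling this matrix $M$, Lemma \ref{04} gives
\[
\Bigl|\bigl\langle M\left[\begin{smallmatrix} x\\ 0\end{smallmatrix}\right],\left[\begin{smallmatrix} 0\\ y\end{smallmatrix}\right]\bigr\rangle\Bigr|^2
\le \bigl\langle M\left[\begin{smallmatrix} x\\ 0\end{smallmatrix}\right],\left[\begin{smallmatrix} x\\ 0\end{smallmatrix}\right]\bigr\rangle\,
\bigl\langle M\left[\begin{smallmatrix} 0\\ y\end{smallmatrix}\right],\left[\begin{smallmatrix} 0\\ y\end{smallmatrix}\right]\bigr\rangle.
\]
The left-hand side equals $|\langle 2\mathfrak{R}T\,x,y\rangle|^2=4|\langle \mathfrak{R}T\,x,y\rangle|^2$, while the right-hand side equals $\langle(f^2(|T|)+f^2(|T^*|))x,x\rangle\,\langle(g^2(|T|)+g^2(|T^*|))y,y\rangle$. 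Taking square roots and dividing by $2$ yields the first inequality.

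For the imaginary-part inequality, I would repeat the construction but apply Lemma \ref{03} to $T$ and to $-T^*$ instead (using $|-T^*|=|T^*|$ and $|(-T^*)^*|=|T|$). Adding the two positive matrices now produces off-diagonal entries $\pm(T-T^*)=\pm 2i\mathfrak{I}T$, while the diagonal entries are unchanged. Applying Lemma \ref{04} exactly as above gives the stated bound for $|\langle \mathfrak{I}T\,x,y\rangle|$. Alternatively, one could simply replace $T$ by $iT$ in the first inequality and observe that $|iT|=|T|$, $|(iT)^*|=|T^*|$, and $\mathfrak{R}(iT)=-\mathfrak{I}T$, which yields the same conclusion.

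There is no serious obstacle here; the only point to handle carefully is the bookkeeping of which positive $2\times 2$ block matrix to add to which, so that the off-diagonal sum produces $2\mathfrak{R}T$ (respectively $\pm 2i\mathfrak{I}T$) while the diagonal blocks combine into the symmetric expressions $f^2(|T|)+f^2(|T^*|)$ and $g^2(|T|)+g^2(|T^*|)$.
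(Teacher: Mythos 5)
Your proposal is correct and follows essentially the same route as the paper: the paper also applies Lemma \ref{03} twice (to $A$ and $B$, then specializes $A=T$, $B=T^*$), adds the two positive block matrices, and reads off the scalar inequality from the positivity of the resulting block matrix, with the imaginary-part case handled by a substitution ($T\mapsto iT^*$ in the paper, $-T^*$ or $iT$ in yours). The only cosmetic difference is that you invoke Lemma \ref{04} with stacked vectors explicitly, whereas the paper states the equivalence between block positivity and the sesquilinear Cauchy--Schwarz bound directly; these are the same mechanism.
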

\begin{proof}
By Lemma \ref{03},
\[\left[ \begin{matrix}
   {{f}^{2}}\left( \left| A \right| \right) & {{A}^{*}}  \\
   A & {{g}^{2}}\left( \left| {{A}^{*}} \right| \right)  \\
\end{matrix} \right]\ge O\text{ and }\left[ \begin{matrix}
   {{f}^{2}}\left( \left| B \right| \right) & {{B}^{*}}  \\
   B & {{g}^{2}}\left( \left| {{B}^{*}} \right| \right)  \\
\end{matrix} \right]\ge O,\]
which imply that
\[\left[ \begin{matrix}
   {{f}^{2}}\left( \left| A \right| \right)+{{f}^{2}}\left( \left| B \right| \right) & {{A}^{*}}+{{B}^{*}}  \\
   A+B & {{g}^{2}}\left( \left| {{A}^{*}} \right| \right)+{{g}^{2}}\left( \left| {{B}^{*}} \right| \right)  \\
\end{matrix} \right]\ge O.\]
This is equivalent to saying that
\begin{equation}\label{07}
{{\left| \left\langle \left( A+B \right)x,y \right\rangle  \right|}^{2}}\le \left\langle \left( {{f}^{2}}\left( \left| A \right| \right)+{{f}^{2}}\left( \left| B \right| \right) \right)x,x \right\rangle \left\langle {{g}^{2}}\left( \left| {{A}^{*}} \right| \right)+{{g}^{2}}\left( \left| {{B}^{*}} \right| \right)y,y \right\rangle
\end{equation}
for any vectors $x,y$. Letting $A=T$ and $B={{T}^{*}}$, in \eqref{07}, we get the first inequality. The second inequality follows directly from the first inequality by setting $T=\textup i{{T}^{*}}$.
\end{proof}

\begin{remark}
The inequalities in Theorem \ref{14} are equivalent to
\begin{equation}\label{16}
\left[ \begin{matrix}
   \frac{{{f}^{2}}\left( \left| T \right| \right)+{{f}^{2}}\left( \left| {{T}^{*}} \right| \right)}{2} & \mathfrak RT  \\
   \mathfrak RT & \frac{{{g}^{2}}\left( \left| T \right| \right)+{{g}^{2}}\left( \left| {{T}^{*}} \right| \right)}{2}  \\
\end{matrix} \right]\ge O\text{ and }\left[ \begin{matrix}
   \frac{{{f}^{2}}\left( \left| T \right| \right)+{{f}^{2}}\left( \left| {{T}^{*}} \right| \right)}{2} & \mathfrak IT  \\
   \mathfrak IT & \frac{{{g}^{2}}\left( \left| T \right| \right)+{{g}^{2}}\left( \left| {{T}^{*}} \right| \right)}{2}  \\
\end{matrix} \right]\ge O.
\end{equation}
From the above discussion, we get
\begin{equation}\label{15}
\left[ \begin{matrix}
   {{f}^{2}}\left( \left| T \right| \right)+{{f}^{2}}\left( \left| {{T}^{*}} \right| \right) & \mathfrak RT+\mathfrak IT  \\
   \mathfrak RT+\mathfrak IT & {{g}^{2}}\left( \left| T \right| \right)+{{g}^{2}}\left( \left| {{T}^{*}} \right| \right)  \\
\end{matrix} \right]\ge O.
\end{equation}
\end{remark}

\begin{remark}\label{rem_imre}
Ando \cite[Theorem 3.4]{8} proved that if $C$ is self-adjoint and $\left[ \begin{matrix}
   A & C  \\
   C & B  \\
\end{matrix} \right]\ge O$, then $\pm C\le A\sharp B$.
Combining this with \eqref{15} implies that
\[\pm\left( \mathfrak RT+\mathfrak IT \right)\le \left( {{f}^{2}}\left( \left| T \right| \right)+{{f}^{2}}\left( \left| {{T}^{*}} \right| \right) \right)\sharp\left( {{g}^{2}}\left( \left| T \right| \right)+{{g}^{2}}\left( \left| {{T}^{*}} \right| \right) \right).\]
In particular,
\[\pm\left( \mathfrak RT+\mathfrak IT \right)\le \left| T \right|+\left| {{T}^{*}} \right|.\]
From \eqref{16}, we also infer that
\[\pm \mathfrak RT\le \left( \frac{{{f}^{2}}\left( \left| T \right| \right)+{{f}^{2}}\left( \left| {{T}^{*}} \right| \right)}{2} \right)\sharp \left( \frac{{{g}^{2}}\left( \left| T \right| \right)+{{g}^{2}}\left( \left| {{T}^{*}} \right| \right)}{2} \right),\]
and
\[\pm \mathfrak IT \le \left( \frac{{{f}^{2}}\left( \left| T \right| \right)+{{f}^{2}}\left( \left| {{T}^{*}} \right| \right)}{2} \right)\sharp \left( \frac{{{g}^{2}}\left( \left| T \right| \right)+{{g}^{2}}\left( \left| {{T}^{*}} \right| \right)}{2} \right).\]
Especially,
\begin{equation}\label{eq_reim_t}
\pm \mathfrak RT \le \frac{\left| T \right|+\left| {{T}^{*}} \right|}{2}\text{ and } \pm \mathfrak IT \le \frac{\left| T \right|+\left| {{T}^{*}} \right|}{2}.
\end{equation}

At this point, it might be thought that the last two inequalities imply 
$$|\mathfrak{R}T|\leq \frac{|T|+|T^*|}{2}\;{\text{or}}\;|\mathfrak{I}T|\leq \frac{|T|+|T^*|}{2}.$$ In fact, neither is true. Indeed, let $$T=\left[\begin{array}{ccc}0&1&0\\0&0&1\\0&0&0\end{array}\right].$$
Direct calculations show that 
$$|T|+|T^*|=\left(\begin{array}{ccc}1&0&0\\0&2&0\\0&0&1\end{array}\right)\;{\text{and}}\;T+T^*=\left(\begin{array}{ccc}0&1&0\\1&0&1\\0&1&0\end{array}\right).$$ The singular values of $|T|+|T^*|$ are 2,1,1, while they are $\sqrt{2},\sqrt{2},0$ for $T+T^*.$ This shows that $|T+T^*|\not\leq|T|+|T^*|$. That is $|\mathfrak{R}T|\not\leq\frac{|T|+|T^*|}{2}.$
\end{remark}

	\section*{Acknowledgment} The authors would like to express their sincere thanks to Professor J. C. Bourin for bringing their attention to a serious mistake in the first draft of this work. In particular, he provided us with the example in Remark  \ref{rem_imre} to confute the claim $|\mathfrak{R}T|\leq \frac{|T|+|T^*|}{2}.$

\section*{Declarations}

\subsection*{Ethical approval}
This declaration is not applicable.
\subsection*{Competing interest}
The authors declare that they have no competing interest.
\subsection*{Authors' contribution}
The authors have contributed equally to this work.
\subsection*{Funding}
The authors did not receive any funding to accomplish this work.
\subsection*{Availability of data and materials}
This declaration is not applicable

\vskip 0.3 true cm

\noindent{\tiny (M. Sababheh) Department of basic sciences, Princess Sumaya University for Technology, Amman, Jordan}
	
\noindent	{\tiny\textit{E-mail address:} sababheh@psut.edu.jo; sababheh@yahoo.com}

\vskip 0.3 true cm 	

\noindent{\tiny (C. Conde)  Instituto de Ciencias, Universidad Nacional de General Sarmiento  and  Consejo Nacional de Investigaciones Cient\'ificas y Tecnicas, Argentina}

\noindent{\tiny \textit{E-mail address:} cconde@campus.ungs.edu.ar}

\vskip 0.3 true cm

\noindent{\tiny (H. R. Moradi) Department of Mathematics, Payame Noor University (PNU), P.O. Box, 19395-4697, Tehran, Iran
	
\noindent	\textit{E-mail address:} hrmoradi@mshdiau.ac.ir}
%-----------------------------------------------------------------------------
%-----------------------------------------------------------------------------
\end{document}